\newtheorem{theorem}{Theorem}
\newtheorem{corollary}[theorem]{Corollary}
\newtheorem{lemma}[theorem]{Lemma}
\newtheorem{proposition}[theorem]{Proposition}
\newtheorem{remark}[theorem]{Remark}
\def\div{\mbox{div}\,}
\def\curl{\mbox{curl}\,}
\def\D{\mathbb{D}}
\def\R{\mathbb{R}}
\def\Sc{\mbox{Sc}\,}
\def\tr{\operatorname{\rm tr}}
\begin{document}

%\bigskip
\title{Hodge decomposition for generalized Vekua spaces in higher dimensions}
\author{Briceyda B. Delgado\thanks{INFOTEC, Centro de Investigación e Innovación en Tecnologías de la Información y Comunicación, Cto. Tecnopolo Sur No. 112, Pocitos, Aguascalientes 20326, Mexico, briceydadelgado@gmail.com}}
%Departamento de Matemáticas y Física, Universidad Aut\'{o}noma de Aguascalientes, Avenida Universidad $940$, Ciudad Universitaria, Aguascalientes $20131$, México, briceydadelgado@gmail.com}}
\date{}
\maketitle
\begin{abstract}
 We introduce the spaces $A^p_{\alpha, \beta}(\Omega)$ of $L^p$-solutions to the Vekua equation (generalized monogenic functions) $D w=\alpha\overline{w}+\beta w$ in a bounded domain in $\R^n$, where $D=\sum_{i=1}^n e_i \partial_i$ is the Moisil-Teodorescu operator, $\alpha$ and $\beta$ are bounded functions on $\Omega$. %and $1 < p < \infty$.
 
 The main result of this work consists of a Hodge decomposition of the $ L^2$ solutions of the Vekua equation. From this orthogonal decomposition arises an operator associated with the Vekua operator, which in turn factorizes certain Schrödinger operators.
Moreover, we provide an explicit expression of the ortho-projection over $A^p_{\alpha, \beta}(\Omega)$ in terms of the well-known ortho-projection of $L^2$ monogenic functions and an isomorphism operator. Finally, we prove the existence of component-wise reproductive Vekua kernels and the interrelationship with the Vekua projection in Bergman's sense. %The last result consists of an integral representation formula that generalizes Cauchy's integral formula for monogenic functions.  
\end{abstract}
\mbox{}\\
\noindent \textbf{Keywords:} Vekua equation, Hodge decomposition, reproducing kernels, Bergman\\ spaces, Schrödinger equation.\\ 
\mbox{}\\
\noindent \textbf{MSC Classification:} 30G20, 46E22, 30G35, 30H20. 

%\tableofcontents
%%%%%%%%%%%%%%%%%%%%%%%%%%%%%%%%%%%%%%%%%%%%%%%%%%%%%%%%%%%%%%%%%%%%%%%%%%%%%%%%%%%%%%%
%\JLb{Attention notations indices norms et produits scalaires, harmoniser.}

\section{Introduction}
The Vekua equation was proposed by Ilya Vekua in the 1950s. Although the Vekua equation was proposed more than six decades ago, there are still many open problems related to this equation.
One of the most important open problems is the search for explicit solutions to the Vekua equation. In many cases, it is not possible to find explicit solutions to this equation, which hinders its practical application in solving physical and mathematical problems.
Another important problem is the study of the properties of the solutions of the Vekua equation and the establishment of which properties are inherited from the analytical (monogenic) functions. 
%For example, in some cases, the solutions may have singularities or unexpected oscillatory behavior, which makes their analysis and understanding difficult.
The foundations of the general theory of generalized analytic functions in the plane were developed simultaneously by Ilya Vekua in his book ``Generalized analytic functions" in \cite{Vekua1962} and by Lipman Bers in his monograph ``Theory of pseudo-analytic functions" \cite{Bers1953}, although Bers called them pseudo-analytic functions, they are basically the same as the so-called generalized analytic functions of Vekua. 

%The results collected in that monograph are focused on linear differential equations, but are not limited to the linear case.
It is important to mention that although a Similarity Principle is not known as it is in the complex case, the tools used in this work rely on the properties of an isomorphism operator, which sends solutions of the Vekua equation into monogenic functions. 
In this work, we will exploit the Hodge decomposition of monogenic functions in order to generalize the orthogonal decomposition to the space of $L^2$ solutions of the Vekua equation
\begin{equation}\label{eq:Vekua-intro}
  Dw=\alpha \overline{w}+\beta w,
\end{equation}
where $D=\sum_{i=1}^n e_i\partial_i$ is the $n$-dimensional Moisil-Tedorescu operator. The importance of the study of the Hodge decomposition and the Vekua equation \eqref{eq:Vekua-intro} lies in its applications. For instance, to associated boundary values problems, integral representations of the solutions, and inverse problems. For the particular case of logarithmic derivative coefficients, i.e. $\alpha=\nabla f/f$ and $\beta=\nabla g/g$, the scalar part of a solution of the Vekua equation is a solution of a conductivity equation, which is fundamental in the Calderon's inverse problem. This relationship has been widely explored in the complex case, and there are numerous works that highlight this correspondence between their solutions, see for instance,  \cite{BLRR2010, BFL2011, Krav2009} and references therein. Concerning inverse problems we suggest \cite{AstPai2006, Fischer-1, Fischer-2, Fischer-3} and for associated boundary value problems \cite{Bers1954, Vekua1962}.

In higher dimensions, particularly in the Clifford algebra setting, in the 1990s Malonek extended the concept of $(F, G)$ derivative in the sense of Bers to quaternion-valued functions and derived a spatial version of pseudo-analytic functions \cite{Malonek1996,Malonek1998}. Later in \cite{Berglez2008}, Berglez showed a representation formula for the solutions of the biquaternionic Vekua equation $D^*w=\varphi(x_1)\overline{w}$, where $D^*=-iD$ and $D$ is the usual Dirac operator in quaternionic analysis. Recently, in \cite{DelPor2017, DelPor2018}, the author proposed a Hilbert transform for the quaternionic main Vekua equation and solved the problem of constructing the vector part of the solutions when the scalar part is known. More recently, in the bicomplex setting \cite{VictorRadial, radial2024}, the authors constructed a transmutation operator that sends bicomplex analytic functions into solutions of the radial bicomplex Vekua equation.
%This paper can be considered as a completion of the works \cite{DelPor2019, Campos2020, DelLeb2020}. 

The outline of the paper is as follows. In Section \ref{sec:Bergman-Vekua spaces}, we introduce the \textit{generalized Vekua spaces}, also known as the space of generalized monogenic functions, in the sense of Vekua. Moreover, we analyze the properties satisfied by the continuous operator $S_{\alpha, \beta}=I-T_{\Omega}[\alpha C+\beta I]$ which transforms solutions of the Vekua equation into monogenic functions in $\Omega$.
In Section \ref{sec:Hodge}, we provide the following Hodge decomposition
\begin{equation}\label{eq:deco-intro}
   L^2(\Omega,C\ell_{0,n})=\text{Ker } (D-\alpha C-\beta)\cap L^2(\Omega,C\ell_{0,n})\oplus (D-M^{\alpha}C-\overline{\beta}) W_0^{1,2}(\Omega,C\ell_{0,n}),
\end{equation}
where $C$ denotes the conjugation operator in $C\ell_{0,n}$, $M^{\alpha}$ is the right-side multiplication operator and the orthogonality evolved in \eqref{eq:deco-intro} is under the scalar product $\langle u,v\rangle=\text{Sc }\int_{\Omega}\overline{u}v$. 
The above Hodge decomposition \eqref{eq:deco-intro} generalizes the Hodge decompositions found in the pioneering works of Spr\"{o}ssig for the kernel of Vekua-type operator with $\alpha\equiv 0$ and $\beta=\sum_{i=0}^n \beta_ie_i$ be a paravector in $C\ell_{0,n}$, see for instance \cite{Sprossig1995, Sprossig2001}. See also \cite{Sprossig1993} for some decompositions of generalized monogenic functions in the context of Clifford algebras (when $\alpha=0$,  $\beta=\overline{\partial} p/p$ in \eqref{eq:Vekua_equation}, and $p$ be a positive $C^{\infty}$ scalar function).
To the best of our knowledge, the orthogonal decomposition \eqref{eq:deco-intro} was not known for the Vekua equation with $\alpha\ne 0$, even in the complex case. 

During this work, we will consider the operator $D-M^{\alpha}C-\overline{\beta}$ appearing in the above orthogonal decomposition \eqref{eq:deco-intro} as the \textit{adjoint} of the Vekua operator $D-\alpha C-\beta$, restricted to a subspace of $L^2(\Omega, C\ell_{0,n})$ with zero vanishing trace. Moreover, we factorize two different Schrödinger operators through these operators as follows
\begin{align}\label{eq:factorizations-intro}
    \nonumber \left(-\Delta+|\alpha|^2+|\beta|^2+\div \vec \alpha-\div \vec \beta+2\alpha\cdot \overline{\beta}\right)h_0&=\text{Sc } (D-\alpha C-\beta)(D-M^{\alpha}C-\overline{\beta})h_0,\\
    \text{ }\\
    \nonumber\left(-\Delta+|\alpha|^2+|\beta|^2+\div \vec \alpha+\div \vec \beta+2\alpha\cdot \beta\right)h_0&=\text{Sc } (D-M^{\alpha}C-\overline{\beta})(D-\alpha C-\beta)h_0,
\end{align}
where $h_0$ is a scalar function.
In \cite{Bernstein1996, Bernstein1999}, Bernstein et al. established a factorization of the Schrödinger operator in terms of the operators $D\pm M^{\gamma}$ in a way that requires a solution of an associated Riccati equation in the three-dimensional case. Later, in \cite{KravKrav2001}, Kravchenko showed that $\gamma=\nabla f/f$ and the factorization reduces to
\begin{equation}\label{eq:Schrodinger-intro}
   \left(-\Delta+\dfrac{\Delta f}{f}\right)u_0=(D+M^{\nabla f/f})(D-M^{\nabla f/f})u_0.
\end{equation}
In the particular case, $\alpha=\nabla f /f$ and $\beta=0$, then both left-hand sides of \eqref{eq:factorizations-intro} reduces to the Schrödinger equation of the left-hand side of \eqref{eq:Schrodinger-intro}.

In Section \ref{sec:Bergman-projection}, we construct an ortho-projection $P_{\alpha,\beta}$ from $L^2(\Omega, C\ell_{0,n})$ to the generalized Vekua space $A^2_{\alpha,\beta}(\Omega, C\ell_{0,n})$, using the isomorphism operators $S_{\alpha,\beta}$ and $S_{\alpha,\beta}^*$ as well as the ortho-projection generated by the Hodge decomposition of monogenic functions, namely $P_{\Omega}$. More precisely, we prove that
\begin{equation}
    P_{\alpha,\beta}=S_{\alpha,\beta}^*\, P_{\Omega}\,(S_{\alpha,\beta}^*)^{-1}=S_{\alpha,\beta}^{-1} \,P_{\Omega} \,S_{\alpha,\beta}.
\end{equation}
In Subsection \ref{sec:kernel}, we extend the results provided in \cite{Campos2020} for the complex case. More precisely, we establish the existence of component-wise reproductive Vekua kernels, and we provide another expression for the Vekua projection satisfying a component-wise reproductive property (see Proposition \ref{prop:projector})
\begin{equation}\label{eq:reproductive-intro}
P_{\alpha,\beta}[w](\vec x)=\bigintsss_{\Omega} \sum_B (-1)^{\frac{|B|(|B|+1)}{2}}\, e_B^2\,  K^B_{\alpha,\beta}(\vec x,\vec y) \, w_B(\vec y) \,d\sigma_{\vec y}, \qquad \forall w\in L^2(\Omega,C\ell_{0,n}). 
\end{equation}

%an explicit expression for them.

%In \cite{Krav2006} were established related factorizations of second order elliptic operators 
%\cite{GurSpr1995, Sprossig1995, Sprossig2001}...

\section{Preliminaries}\label{subsec:Preliminaries}
Let us consider the real Clifford algebra $C\ell_{0,n}$ generated by the elements $e_0=1,e_1,e_2,\cdots,e_n$ with the relation $e_ie_j+e_je_i=-2\delta_{ij}$, where $\delta_{i,j}$ is the Kronecker delta function $(i,j=1,2,\cdots,n)$. We embed the Euclidean space $\R^{n+1}$ in $C\ell_{0,n}$ through the identification of $x=(x_0,x_1,x_1,\cdots,x_n)\in \R^{n+1}$ with the Clifford \textit{paravector} $x=x_0+\sum_{i=1}^n x_ie_i$. Let us denote by $\text{Sc }x=x_0$ and $\text{Vec }x=\vec x=\sum _{i=1}^n e_ix_i \in \R^n$ the scalar and the vector part of any arbitrary paravector $x$. From now on, $\Omega\subset \R^n$ will be a bounded domain with sufficiently smooth boundary $\partial\Omega$, and the elements $\vec x\in \Omega$ will be called \textit{vectors}.
A basis for $C\ell_{0,n}$ is the subset
\[
  \{e_0=1, e_A=e_{i_1}e_{i_2}\cdots e_{i_k} \colon A=\{i_1,i_2,\cdots i_k\}, 1\leq i_1<i_2<\cdots <i_k\leq n\}.
\]
We define the \textit{conjugation} in $C\ell_{0,n}$ as usual $\overline{ab}=\overline{b}\overline{a}$, $\forall a,b\in C\ell_{0,n}$. We will say that $w$ is a $C\ell_{0,n}$-valued function defined in $\Omega$, $w \colon \Omega \to C\ell_{0,n}$, if
\[ 
w(\vec x)=\sum_A w_A(\vec x)\, e_A, \quad \vec x\in \Omega,
\]
where the coordinates $w_A$ are real-valued functions defined in $\Omega$, that is $w_A\colon \Omega \to \R$. In particular, paravector-valued functions are denoted by $g(\vec x)=\sum_{i=0}^n g_i(\vec x) e_i$. 
%{\color{red} Throughout this work, all the functions considered here will be paravector-valued.}

In \cite{BDS1982} was proved several generalizations of classic results of functional analysis for right (left) $C\ell_{0,n}$-modules. For instance, the Riesz representation theorem \cite[Th.\ 7.6]{BDS1982}, the Hahn-Banach extension theorem \cite[Th.\ 2.10]{BDS1982}, among others.
Let us denote by $L^p(\Omega,C\ell_{0,n})$ the set of $C\ell_{0,n}$-valued functions defined in $\Omega$ which are $p$-integrable (that is, component-wise $p$-integrable) $1<p<\infty$. It is easy to check that $L^p(\Omega,C\ell_{0,n})$ is a right $C\ell_{0,n}$-module. In particular, $L^2(\Omega,C\ell_{0,n})$ is a right Hilbert $C\ell_{0,n}$-module under the inner product
\begin{align}\label{eq:inner-product}
  \langle u,v\rangle_{L^2(\Omega)}=\int_{\Omega}\overline{u} v \, d\sigma, \quad \forall u,v\in L^2(\Omega,C\ell_{0,n}).
\end{align}
Some properties satisfied by this inner product are 
\begin{enumerate}
\item $\langle u,v a+w \rangle_{L^2(\Omega)}=\langle u,v \rangle_{L^2(\Omega)}a + \langle u, w \rangle_{L^2(\Omega)}$,
\item $\langle u,v \rangle_{L^2(\Omega)}=\overline{\langle v,u \rangle}_{L^2(\Omega)}$,
\end{enumerate}
for all $u, v, w \in L^2(\Omega,C\ell_{0,n})$ and $a\in C\ell_{0,n}$. Other functional spaces that will be used throughout this paper are the classic Sobolev spaces $W^{1,p}(\Omega, C\ell_{0,n})$, its subspace with vanishing trace $W_0^{1,p}(\Omega, C\ell_{0,n})$ and the space of test functions $C_{c}^{\infty}(\Omega, C\ell_{0,n})$. If $\Omega$ is a bounded Lipschitz domain, then the trace operator $\text{tr}\colon W^{1,p}(\Omega,C\ell_{0,n})\rightarrow L^p(\Omega,C\ell_{0,n})$, defined by $\text{tr }u=u|_{\partial\Omega}$ is continuous. 
  
The \textit{Moisil-Teodorescu} differential operator is defined as follows
\begin{align}\label{eq:Moisil-Teodorescu}
  D=\sum_{i=1}^n e_i \, \partial_i,
\end{align}
where $\partial_i$ means the partial derivative with respect to the variable $x_i$. We will say that $w\in C^1(\Omega,C\ell_{0,n})$ is \textit{monogenic} if $D\, w=0$ in $\Omega$. By the relation $\Delta=-D^2$ and Weyl's Lemma, we will not make a distinction between weak and strong monogenic functions.

Let us denote by $A^p(\Omega,C\ell_{0,n})$ the subspace of monogenic functions in $L^p(\Omega,C\ell_{0,n})$, $1<p<\infty$, which is also a right $C\ell_{0,n}$-module, with the usual norm given by
\begin{align*}
   \|w\|_{A^p(\Omega)}^p=\|w\|_{L^p(\Omega)}^p=\int_{\Omega}{|w|^p \, d\sigma}.
\end{align*}
Due to $A^p(\Omega,C\ell_{0,n})$ is a closed subspace in $L^p(\Omega, C\ell_{0,n})$ $1<p<\infty$ \cite[Prop.\ 3.73]{GuSpr1997}.
%(see \cite[Prop.\ 8.5]{GuHaSpr2008} for $\Omega\subset \R^3$ and the functions taking values in the algebra of quaternions $\H$). 
Therefore, the right Hilbert $C\ell_{0,n}$-module $L^2(\Omega, C\ell_{0,n})$ allows an orthogonal decomposition
\begin{lemma}\label{lemma:Hodge-monogenic}
\cite[Th.\ 1]{GurSpr1995}The Hilbert space $L^2(\Omega, C\ell_{0,n})$ allows an orthogonal decomposition
\begin{align}\label{eq:Hodge-1}
   L^2(\Omega, C\ell_{0,n})=A^2(\Omega,C\ell_{0,n})\oplus D W_0^{1,2}(\Omega,C\ell_{0,n}),
\end{align} 
where $A^2(\Omega,C\ell_{0,n})=\text{Ker }D\cap L^2(\Omega, C\ell_{0,n})$ and with respect to the inner product \eqref{eq:inner-product}.
\end{lemma}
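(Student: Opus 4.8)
The statement is an orthogonal-complement identity in the Hilbert module $L^2(\Omega,C\ell_{0,n})$, so the plan is to fix the closed submodule $A^2(\Omega,C\ell_{0,n})=\text{Ker }D\cap L^2(\Omega,C\ell_{0,n})$ (its closedness was recorded just above) and show that its orthogonal complement is exactly $D\,W_0^{1,2}(\Omega,C\ell_{0,n})$. This splits into two inclusions: first that $D\,W_0^{1,2}$ is orthogonal to $A^2$, and second that every element of $L^2$ orthogonal to $A^2$ already lies in $D\,W_0^{1,2}$, i.e.\ that $A^2+D\,W_0^{1,2}$ exhausts $L^2$. Directness of the sum is then automatic: if $f\in A^2\cap D\,W_0^{1,2}$, orthogonality gives $\langle f,f\rangle=0$, and since $\text{Sc}\,\langle f,f\rangle=\|f\|_{L^2}^2$ this forces $f=0$.

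For orthogonality I would use the Clifford--Stokes (Borel--Pompeiu) integration-by-parts identity
\[
\int_{\partial\Omega}\overline{f}\,\vec n\,g\,dS=\int_{\Omega}\big[(\overline{f}D)g+\overline{f}(Dg)\big]\,d\sigma,
\]
where $\vec n$ is the outward unit normal regarded as a Clifford vector, applied to $f\in A^2$ and $g\in W_0^{1,2}$. By density of $C_c^\infty(\Omega,C\ell_{0,n})$ in $W_0^{1,2}$ it suffices to treat $g\in C_c^\infty$, for which the boundary term is absent. The remaining point is that the conjugate of a left-monogenic function is right-monogenic: using $\overline{e_i}=-e_i$ and $\overline{ab}=\overline{b}\,\overline{a}$ one gets $\overline{f}D=-\overline{Df}$, so $Df=0$ yields $\overline{f}D=0$. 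Hence $\langle f,Dg\rangle=\int_\Omega\overline{f}(Dg)\,d\sigma=0$, and extending by continuity to all $g\in W_0^{1,2}$ gives $D\,W_0^{1,2}\subseteq(A^2)^{\perp}$.

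The substantive half is surjectivity. Given $u\in L^2(\Omega,C\ell_{0,n})$, I would produce the required $g$ by solving the weak Dirichlet problem $-\Delta g=Du$ with $g\in W_0^{1,2}(\Omega,C\ell_{0,n})$: since $u\in L^2$, the distribution $Du$ defines a bounded functional on $W_0^{1,2}$, and the Dirichlet form $(\varphi,\psi)\mapsto\int_\Omega\langle\nabla\varphi,\nabla\psi\rangle$ is coercive on $W_0^{1,2}$ by the Poincaré inequality, so Lax--Milgram furnishes a unique solution $g$. Setting $f:=u-Dg\in L^2$ and invoking $\Delta=-D^2$, one checks $Df=Du-D^2g=Du+\Delta g=0$ in the weak sense; by Weyl's lemma (the interchangeability of weak and strong monogenicity noted above) it follows that $f\in A^2$. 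This exhibits $u=f+Dg$ with the two summands in the asserted subspaces, completing the decomposition.

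The main obstacle is precisely this surjectivity step. One must set up the variational problem correctly inside the right $C\ell_{0,n}$-module --- either reducing it to the $2^n$ real components or working with $\text{Sc}\,\langle\cdot,\cdot\rangle$ as the underlying real inner product --- verify that $Du$ genuinely belongs to $W^{-1,2}$ and that the Lax--Milgram solution has the claimed vanishing trace, and finally confirm that the weak identity $Df=0$ upgrades to honest monogenicity through Weyl's lemma. By contrast, the orthogonality and directness steps are routine once the integration-by-parts formula and the conjugation relation $\overline{f}D=-\overline{Df}$ are in hand.
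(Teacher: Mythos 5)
Your proof is correct, but there is nothing internal to compare it with: the paper does not prove Lemma \ref{lemma:Hodge-monogenic} at all, it imports it verbatim from \cite[Th.\ 1]{GurSpr1995}. So the relevant comparison is with the cited literature. G\"urlebeck--Spr\"o\ss{}ig obtain the decomposition from the Borel--Pompeiu formula and the mapping properties of the Teodorescu transform $T_{\Omega}$ and the Cauchy operator $F_{\partial\Omega}$; that route simultaneously produces the explicit ortho-projection $P_{\Omega}=F_{\partial\Omega}(\tr T_{\Omega}F_{\partial\Omega})^{-1}\tr T_{\Omega}$ of \eqref{eq:Bergman_projection}, which is what this paper actually uses later (Theorem \ref{theo:Bergman-projection} and Subsection \ref{sec:kernel}). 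Your argument is instead variational and self-contained: orthogonality via the Clifford--Stokes identity combined with $\overline{f}D=-\overline{Df}$ (so the conjugate of a left-monogenic function is right-monogenic; note the identity may be applied because monogenic functions are smooth on the support of $g$), directness via $\text{Sc}\,\langle f,f\rangle_{L^2}=\|f\|_{L^2}^2$, and exhaustion by solving $-\Delta g=Du$ componentwise in $W_0^{1,2}$ with Lax--Milgram and upgrading the weak identity $Df=0$ through Weyl's lemma, exactly as the paper's convention (``we will not make distinction between weak and strong monogenic functions'') permits. The two points your sketch leaves implicit are routine but worth recording: each Clifford component of $Du$ is a finite sum of first-order derivatives of $L^2$ functions, hence lies in $W^{-1,2}(\Omega,\mathbb{R})$; and $D(Dg)=-\Delta g$ holds distributionally for $g$ merely in $W_0^{1,2}$, since the mixed-derivative terms cancel against the antisymmetry $e_ie_j=-e_je_i$. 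In short, your approach buys elementarity and independence from integral-operator machinery, at the cost of giving a purely existential decomposition with no formula for $P_{\Omega}$ --- and that formula is the main reason the paper needs the lemma.
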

Later, in Theorem \ref{th:Hodge-decomposition}, we will generalize the above Hodge decomposition for the space of $L^2$-solutions of the Vekua equation.

%\JLb{Mention uniqueness properties from $\Omega \subset \D$, $A(\Omega)>0$, zeroes, ... see below in a further section.}
We define the \textit{Teodorescu transform} \cite{GuHaSpr2008}
\begin{align}\label{eq:Teodorescu_operator}
   T_{\Omega}[w](\vec x)=-\int_{\Omega} 
	  E(\vec y-\vec x) w(\vec y)\, d\vec y, \quad \vec x\in\R^n,
\end{align}
where $w\in L^p(\Omega,C\ell_{0,n})$ and the Cauchy kernel $E$ is given by
\begin{equation}\label{eq:Cauchy-kernel}
E(\vec x)=-\frac{\vec x}{\sigma_n|\vec x|^n}, \quad \vec x\in \R^n\setminus\{0\},
\end{equation}
and $\sigma_n$ is the surface area of the unit sphere in $\R^n$. Moreover, $T_{\Omega}\colon L^p(\Omega, C\ell_{0,n})\to L^p(\Omega, C\ell_{0,n})$ is a bounded operator, and it is a right inverse of the Moisil-Teodorescu operator $D$. That is, $DT_{\Omega}=I$ in $\Omega$.
For $w\in L^p(\partial\Omega)$, the \textit{Cauchy operator} is defined by
\begin{align}\label{eq:Cauchy_operator}
   F_{\partial\Omega}[w](\vec x)=\int_{\partial\Omega} 
	  E(\vec y-\vec x) \eta (\vec y)w(\vec y)\, ds_{\vec y}, \quad \vec x\in\R^n\setminus \partial\Omega,
\end{align}
where $\vec \eta$ is the outward normal vector to $\partial\Omega$.
%Let $w\in C^1(\Omega,C\ell_{0,n})\cap C(\overline{\Omega},C\ell_{0,n})$. The following so-called Borel-Pompeiu formula \cite{GurSpr1995} relates the above-mentioned operators \eqref{eq:Teodorescu_operator} and \eqref{eq:Cauchy_operator} as follows
%\begin{align}\label{eq:Borel-Pompeiu}
%   T_{\Omega}[D w](\vec x)+F_{\partial\Omega}[\tr w](\vec x)=\left\{
%   \begin{array}{ccc}
%      w(\vec x), &\text{  } \vec x\in \Omega,\\
%      0, &\text{  } \vec x\in \R^n \setminus \overline{\Omega},
%  \end{array}
%\right. 
%\end{align}
%where $\tr w$ is the trace of $w$ on $\partial\Omega$.

Besides, the ortho-projection $P_{\Omega} \colon L^2(\Omega,C\ell_{0,n}) \to A^2(\Omega,C\ell_{0,n})$ has the explicit form
\begin{align}\label{eq:Bergman_projection}
   P_{\Omega}=F_{\partial\Omega} (\tr T_{\Omega} F_{\partial\Omega})^{-1} \tr T_{\Omega},
\end{align}
where $T_{\Omega}$ and $F_{\partial\Omega}$ are the Teodorescu transform \eqref{eq:Teodorescu_operator} and the Cauchy operator \eqref{eq:Cauchy_operator}, respectively.

%There exists an entire characterization of the image of the ortho-projection $Q_{\Omega}=I-P_{\Omega}$ in terms of the Teodorescu transform as illustrated by the following result. 
%The proof for the quaternionic case can be found  in \cite[Prop.\ 8.9]{GuHaSpr2008}, and for $C\ell_{0,n}$ valued functions see \cite[Cor.\ 3]{GurSpr1995}. 
%\begin{proposition}\label{prop:trace-Teodorescu} \cite[Cor.\ 3]{GurSpr1995} Let $g\in L^2(\Omega, C\ell_{0,n})$. Then $g\in \text{Im } Q_{\Omega}=A^2(\Omega,C\ell_{0,n})^{\bot}=D W_0^{1,2}(\Omega, C\ell_{0,n})$ if and only if $\tr T_{\Omega}[g]=0$.
%\end{proposition}

\section{Generalized Vekua spaces}\label{sec:Bergman-Vekua spaces}

Let $1<p<\infty$ and let $\alpha,\beta \in L^{\infty}(\Omega,C\ell_{0,n})$ be bounded functions on $\Omega$. We are interested in the analysis of solutions of the \textit{Vekua equation}
\begin{align}\label{eq:Vekua_equation}
   D w=\alpha\, \overline{w}+\beta w, \quad \text{ in }\Omega.
\end{align}

Let us define the \textit{generalized Vekua space} $A_{\alpha,\beta}^p(\Omega,C\ell_{0,n})$ to consist of $C\ell_{0,n}$-valued functions $w \in L^p(\Omega,C\ell_{0,n})$ satisfying \eqref{eq:Vekua_equation} in the sense of distributions on $\Omega$. 
For simplicity, sometimes this subspace will be denoted by $A_{\alpha,\beta}^p(\Omega)=A_{\alpha,\beta}^p(\Omega,C\ell_{0,n})$. The equation \eqref{eq:Vekua_equation} was called the \textit{Vekua equation} in \cite{Vekua1962} in the complex case. 

%which plays an important role in the theory of pseudo-analytic functions, sometimes also called generalized analytic functions. 

Notice that if $\alpha\equiv 0$, then $A_{0,\beta}^p(\Omega,C\ell_{0,n})$ is a subspace over $C\ell_{0,n}$. Meanwhile, if $\alpha\ne 0$, due to the Clifford conjugation acting in \eqref{eq:Vekua_equation} we have that $A_{\alpha,\beta}^p(\Omega,C\ell_{0,n})$ is a subspace over $\R$. 
%In order to avoid confusion, we will look at the cases separately.
%\comment{Siempre se manejan por separado?}

Moreover, the complex version of the Vekua equation \eqref{eq:Vekua_equation} 
 has been analyzed in many works, see for instance \cite{BLRR2010, BFL2011, Fischer-1, Fischer-2, Campos2020, DelLeb2020}. In this case, $w=w_0+iw_1$ satisfies $\overline{\partial}w=\alpha \overline{w}$ if and only if $w$ satisfies the following system of generalized Cauchy-Riemann equations
%\comment{Is there a system in $R^n$?}
\begin{align*} %<\label{eq:generalized_CR}
   \nonumber \partial_x(\sigma w_1)=-\sigma^2 \partial_y \left(\frac{w_0}{\sigma}\right),   \quad
	  \partial_y(\sigma w_1)=\sigma^2 \partial_x\left(\frac{w_0}{\sigma}\right),
\end{align*}
%$\sigma=e^{T_{\D}[\alpha]}$
where $\alpha=\overline{\partial}\log \sigma$, $\partial_x$ and $\partial_y$ are the partial derivatives with respect to $x$ and $y$, respectively.

Meanwhile, in more dimensions, this equation \eqref{eq:Vekua_equation} has been little explored. In the quaternionic case, one of the pioneering works is \cite[Ch.\ 16]{Krav2009}, for the quaternionic case we mean quaternionic valued functions defined in a three-dimensional domain. It was established in  \cite{Krav2009} that $w=w_0+\vec w=w_0+\sum_{i=1}^3 w_i e_i$ is a solution of the Vekua equation $Dw=(Df/f)\overline{w}$ if and only if
\[
 \div (f\vec w)=0, \quad 
 \curl( f\vec w)=-f^2 \nabla\left(f^{-1}w_0\right), \quad \text{ in }\Omega.
\]
In \cite{DelPor2019}, an important relation between the Vekua equation and the Beltrami equation was established. More precisely, if $\alpha=\nabla f/f$ and $\beta=\nabla g/g$, where $f$ and $g$ are non-vanishing scalar-valued functions in $W^{1,\infty}(\Omega)$, we have that $w=w_0+\vec w$ is a solution of the quaternionic Vekua equation 
\begin{equation}\label{eq:Vekua-quaternionic}
  Dw=\dfrac{\nabla f}{f} \overline{w}+\dfrac{\nabla g}{g}w,
\end{equation}
if and only if $u=w_0/(fg)+(f\vec w)/g$ is solution of the \textit{quaternionic Beltrami equation} \cite{DelPor2019}
\begin{equation}\label{eq:Beltrami-equation}
   Du=\dfrac{1-f^2}{1+f^2} D\overline{u}.
\end{equation}
A similar relation was established in \cite{Santa2019} in order to analyze the Calderón problem in three and higher dimensions. In order to consider a similar Clifford extension of the above equivalence, we will decompose the $C\ell_{0,n}$-valued solutions of the Vekua equation as follows
\[
w=[w]_0+[w]_1+[w]_2+\cdots+[w]_n,
\]
where $[w]_k=\sum_{|A|=k} w_A\, e_A$. Equivalently, we can rewrite it as 
\[
w=\sum_{k\equiv 0,3 (\text{mod } 4)} [w]_k+\sum_{k\equiv 1,2 (\text{mod } 4)} [w]_k, \quad 0\leq k\leq n.
\]

%into its scalar and non-scalar parts. From now on, $\text{Sc }(w)=w_0$ and $\text{NSc }(w)=w-w_0$ means the scalar and the non-scalar part of $w$, respectively. 

Now, we will see that there exists an equivalence with a Clifford Beltrami equation, thus generalizing the results obtained in \cite{Santa2019, DelPor2019}.
\begin{proposition}\label{prop:Beltrami equivalence}
Let $\alpha=\nabla f/f$ and $\beta=\nabla g/g$ with $f, g$ non-vanishing scalar functions in $ W^{1,\infty}(\Omega)$. Then $w=\sum_{k\equiv 0,3 (\text{mod } 4)} [w]_k+\sum_{k\equiv 1,2 (\text{mod } 4)} [w]_k$ is solution of \eqref{eq:Vekua_equation} in $\Omega$ if and only if
\begin{equation}\label{eq:transformation-solution}
  u=\dfrac{1}{fg}\sum_{k\equiv 0,3 (\text{mod } 4)} [w]_k+\dfrac{f}{g}\sum_{k\equiv 1,2 (\text{mod } 4)} [w]_k,
\end{equation}\color{black}
is solution of the \text{Clifford Beltrami equation}
\begin{equation}\label{eq:Clifford-Beltrami}
    Du=\dfrac{1-f^2}{1+f^2} D\overline{u}\qquad \text{in }\Omega.
\end{equation}
\end{proposition}
\begin{proof}
By \cite[Prop. 3.8]{GuHaSpr2008}, we have that
\[
\overline{w}=\sum_{k\equiv 0,3 (\text{mod } 4)} [w]_k\,-\sum_{k\equiv 1,2 (\text{mod } 4)} [w]_k, \quad 0\leq k\leq n.
\]
The proof is straightforward only by noting that  $w=\sum_{k\equiv 0,3 (\text{mod} 4)} [w]_k+\sum_{k\equiv 1,2 (\text{mod} 4)} [w]_k$ is solution of \eqref{eq:Vekua_equation} in $\Omega$ if and only if 
\begin{align*}
 (fg)\, D\left(\dfrac{1}{fg} \sum_{k\equiv 0,3 (\text{mod } 4)} [w]_k \right)+\dfrac{g}{f} D\left(\dfrac{f}{g} \sum_{k\equiv 1,2 (\text{mod } 4)} [w]_k \right)=0\qquad \text{in }\Omega.
\end{align*}
That is, 
\begin{align}\label{eq:equivalent-Vekua}
 D\left(\dfrac{f}{g} \sum_{k\equiv 1,2 (\text{mod } 4)} [w]_k \right)=-f^2\, D\left(\dfrac{1}{fg} \sum_{k\equiv 0,3 (\text{mod } 4)} [w]_k \right)\qquad \text{in }\Omega.
\end{align}
Thus, 
\begin{align*}
Du=(1-f^2)\,D\left(\dfrac{1}{fg}\sum_{k\equiv 0,3 (\text{mod } 4)} [w]_k\right)\quad
D\overline{u}=(1+f^2)\,D\left(\dfrac{1}{fg}\sum_{k\equiv 0,3 (\text{mod } 4)} [w]_k\right),
\end{align*}
which in turn is equivalent to \eqref{eq:Clifford-Beltrami} as we desired.
%Equivalently, if and only if $u$ defined by \eqref{eq:transformation-solution} satisfies
%\begin{equation}\label{eq:equivalent-Vekua}
%   D\left(\dfrac{f}{g} \text{NSc }(w)\right)=-f^2 \nabla\left(\dfrac{w_0}{fg}\right)\qquad \text{in }\Omega.
%\end{equation}
\end{proof}
Taking the vector part of \eqref{eq:equivalent-Vekua} and applying the divergence operator, we get
\begin{corollary}\label{cor:conductividad-Schrodinger}
Under the same hypothesis than Proposition \ref{prop:Beltrami equivalence}. If $w$ solves \eqref{eq:Vekua_equation}, then   $w_0/(fg)$ is solution of the conductivity equation 
\begin{equation}\label{eq:conductivity}
   \div \left(f^2 \nabla\left(\dfrac{w_0}{fg}\right)\right)=0, \quad \text{ in }\Omega.
\end{equation}
%Besides, if $\beta\equiv 0$, then $w_0$ is solution of the Schrödinger equation
Besides, if $w$ solves \eqref{eq:Vekua_equation}, then $w_0/g$ is solution of the Schrödinger equation
\begin{equation}\label{eq:Schrodinger}
   \left(\Delta-\dfrac{\Delta f }{f}\right)\dfrac{w_0}{g}=0, \quad \text{ in }\Omega.
\end{equation}
\end{corollary}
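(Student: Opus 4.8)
The plan is to derive both identities directly from the equivalent formulation \eqref{eq:equivalent-Vekua} established inside the proof of Proposition \ref{prop:Beltrami equivalence}, namely $D\!\left(\frac{f}{g}\,\text{NSc}(w)\right)=-f^2\nabla\!\left(\frac{w_0}{fg}\right)$, by exploiting the grade structure of the Moisil–Teodorescu operator. Throughout I abbreviate $\phi=w_0/(fg)$, so that the right-hand side $-f^2\nabla\phi$ is a purely vectorial (grade-one) field, whereas $\frac{f}{g}\,\text{NSc}(w)$ carries no scalar part. The two facts I will lean on are the componentwise identity $D^2=-\Delta$ and the elementary grade computation $\Sc(D\vec U)=-\div\vec U$, valid for any vector field $\vec U$.

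First I would prove the conductivity equation \eqref{eq:conductivity}. Applying $D$ to both sides of \eqref{eq:equivalent-Vekua} and using $D^2=-\Delta$ yields $-\Delta\!\left(\frac{f}{g}\,\text{NSc}(w)\right)=-D(f^2\nabla\phi)$. I then take the scalar part of this identity. On the left, $\Delta$ acts componentwise and so commutes with $\Sc$; since $\frac{f}{g}\,\text{NSc}(w)$ has vanishing scalar part, the left side contributes nothing. On the right, applying $\Sc(D\vec U)=-\div\vec U$ with $\vec U=f^2\nabla\phi$ gives $\Sc\!\left(-D(f^2\nabla\phi)\right)=\div(f^2\nabla\phi)$. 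Equating the two scalar parts produces $\div(f^2\nabla\phi)=0$, which is exactly \eqref{eq:conductivity}.

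Second, I would pass to the Schrödinger equation \eqref{eq:Schrodinger} by the classical Liouville substitution. Setting $\psi=w_0/g=f\phi$, so that $\phi=\psi/f$ and hence $f^2\nabla\phi=f\nabla\psi-\psi\nabla f$, I expand $\div(f^2\nabla\phi)=\div(f\nabla\psi)-\div(\psi\nabla f)$ by the Leibniz rule. The two mixed terms $\nabla f\cdot\nabla\psi$ cancel, leaving $f\Delta\psi-\psi\Delta f$. Thus the conductivity equation obtained above is equivalent to $f\Delta\psi-\psi\Delta f=0$, that is $\left(\Delta-\frac{\Delta f}{f}\right)\!\frac{w_0}{g}=0$, which is \eqref{eq:Schrodinger}.

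The computations are short, and the only points requiring care are bookkeeping ones. The main subtlety is the grade analysis: one must be certain that applying $D$ and then extracting the scalar part isolates precisely the divergence of $f^2\nabla\phi$ on the right while annihilating the non-scalar left-hand term, which hinges on $\text{NSc}(w)$ having no grade-zero component and on $f^2\nabla\phi$ being purely vectorial (so that $\Sc(D\vec U)=-\div\vec U$ applies cleanly). A secondary point is regularity: since \eqref{eq:Schrodinger} involves $\Delta f$, the identities should be read in the distributional sense for $f,g\in W^{1,\infty}(\Omega)$, and one should remark that the cancellation of cross terms in the Liouville step, as well as the application of $D$ to \eqref{eq:equivalent-Vekua}, remain valid at the level of distributions.
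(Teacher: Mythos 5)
Your proof is correct and follows exactly the route the paper leaves implicit: the corollary is stated without proof as an immediate consequence of Proposition \ref{prop:Beltrami equivalence}, and your derivation---applying $D$ to \eqref{eq:equivalent-Vekua}, using $D^2=-\Delta$ and $\Sc(D\vec U)=-\div\vec U$ to isolate \eqref{eq:conductivity}, then passing to \eqref{eq:Schrodinger} via the Liouville substitution $\psi=f\,\phi=w_0/g$---is precisely the computation being omitted. Your closing remarks on the grade bookkeeping and on reading $\Delta f$ distributionally for $f\in W^{1,\infty}(\Omega)$ are appropriate and add care the paper does not spell out.
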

%The previous results where fundamental in the analysis of Calderón's problem both in the complex case and in the higher dimensional case. 

If $\beta\equiv 0$ and $\alpha=\nabla f/f$, then $A_{\alpha,0}^p((\Omega,C\ell_{0,n})\ne \emptyset$ because of $f\in A_{\alpha,0}^p((\Omega,C\ell_{0,n})$. Moreover, $\alpha=\nabla f/f$ solves \eqref{eq:Vekua_equation} if and only if $\Delta f=0$ in $\Omega$. More generally, $\alpha\in A_{\alpha,\beta}^p(\Omega,C\ell_{0,n})$ if and only if $D\alpha=|\alpha|^2 +\beta \alpha$.

Now, let us introduce the operator 
\begin{align}\label{eq:isomorphism}
S_{\alpha,\beta}:=I-T_{\Omega}[\alpha C+\beta I],
\end{align}
where $C$ is the usual Clifford conjugation operator, observe that  $S_{\alpha,\beta}$ is a continuous operator that transforms solutions of \eqref{eq:Vekua_equation} into left-monogenic functions in $\Omega$. 
The operator \eqref{eq:isomorphism} has been used previously in other works by the author; see, for instance, \cite{DelLeb2020, DelPor2019}. In particular, in \cite{DelPor2019} it was introduced the Vekua-Hardy spaces of solution of the main Vekua equation in the quaternionic case. In the absence of a Similarity Principle, this operator was very useful. 
%Moreover,
The following results generalize \cite[Sec.\ 3]{DelLeb2020} in the complex case.
\begin{proposition}\label{lemma:transformationS}
Let $\alpha, \beta \in L^{\infty}(\Omega,C\ell_{0,n})$ and $w\in L^p(\Omega,C\ell_{0,n})$. Then
\begin{align*}
w\in A_{\alpha,\beta}^p(\Omega,C\ell_{0,n}) \Leftrightarrow S_{\alpha,\beta}[w] \in A^p(\Omega,C\ell_{0,n}). 
\end{align*}
\end{proposition}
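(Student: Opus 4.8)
The plan is to compute $D S_{\alpha,\beta}[w]$ directly and to show that its vanishing is equivalent to the Vekua equation \eqref{eq:Vekua_equation}; the whole statement then collapses to a single distributional identity. First I would verify that $S_{\alpha,\beta}[w]$ actually lands in $L^p(\Omega,C\ell_{0,n})$, so that both sides of the claimed equivalence live in the right spaces. Since $\alpha,\beta\in L^{\infty}(\Omega,C\ell_{0,n})$ and $w\in L^p(\Omega,C\ell_{0,n})$, the function $(\alpha C+\beta I)[w]=\alpha\overline{w}+\beta w$ belongs to $L^p(\Omega,C\ell_{0,n})$; applying the bounded Teodorescu transform $T_{\Omega}\colon L^p(\Omega,C\ell_{0,n})\to L^p(\Omega,C\ell_{0,n})$ keeps us in $L^p$, whence $S_{\alpha,\beta}[w]=w-T_{\Omega}[(\alpha C+\beta I)w]\in L^p(\Omega,C\ell_{0,n})$.

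The heart of the argument is then a one-line computation. Applying the Moisil--Teodorescu operator in the distributional sense and using that $T_{\Omega}$ is a right inverse of $D$, i.e.\ $DT_{\Omega}=I$ on $L^p(\Omega,C\ell_{0,n})$, I obtain
\begin{equation*}
   D\,S_{\alpha,\beta}[w]=Dw-D\,T_{\Omega}[(\alpha C+\beta I)w]=Dw-(\alpha\overline{w}+\beta w).
\end{equation*}
Hence $D\,S_{\alpha,\beta}[w]=0$ in $\Omega$ if and only if $Dw=\alpha\overline{w}+\beta w$ in $\Omega$, both understood distributionally. Reading this equivalence in each direction gives precisely the statement: $w\in A_{\alpha,\beta}^p(\Omega,C\ell_{0,n})$ (the Vekua equation holds) exactly when $S_{\alpha,\beta}[w]$ is monogenic, that is $S_{\alpha,\beta}[w]\in A^p(\Omega,C\ell_{0,n})$. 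The two implications are symmetric and follow at once from the displayed identity, so no separate work is needed for the converse.

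The only point requiring genuine care — and what I regard as the main (mild) obstacle — is the rigorous justification of $D\,T_{\Omega}\phi=\phi$ for a general $\phi\in L^p$, since for $w\in L^p$ the object $Dw$ is a priori only a distribution. This is exactly the property of the Teodorescu transform recalled in the preliminaries, so it can be invoked directly; one merely has to ensure that the differentiation is tested against $C_c^{\infty}(\Omega,C\ell_{0,n})$ and that the real-linearity of the conjugation operator $C$ causes no trouble when commuting $D$ past the bracket $[\,\alpha C+\beta I\,]$. Everything else is the routine bookkeeping of the equivalence, which the single identity above already settles.
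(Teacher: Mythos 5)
Your proof is correct and follows essentially the same route as the paper, which simply invokes the identity $DT_{\Omega}=I$ on $L^p(\Omega,C\ell_{0,n})$ (citing the analogous complex-variable lemma in \cite{DelLeb2020}) to get $D\,S_{\alpha,\beta}[w]=Dw-(\alpha\overline{w}+\beta w)$ distributionally, from which both implications are immediate. Your additional verification that $S_{\alpha,\beta}[w]\in L^p(\Omega,C\ell_{0,n})$ is the same boundedness observation the paper makes when introducing $S_{\alpha,\beta}$, so nothing is missing.
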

\begin{proof}
The proof is analogous to \cite[Lem.\ 3.1]{DelLeb2020} in the context of the complex variable, just using that the Teodorescu transform $T_{\Omega}$ is a right inverse of the Moisil-Teodorescu operator $D$ in $\Omega$.
%\comment{Look for the Weyl's lemma in more dimensions.}
\end{proof}
\begin{proposition}\label{prop:Bergman-generalized-closed}
Let $1< p<\infty$ and let $\alpha, \beta\in L^{\infty}(\Omega,C\ell_{0,n})$. Then $A_{\alpha,\beta}^p(\Omega,C\ell_{0,n})$ is closed in $L^p(\Omega,C\ell_{0,n})$.
\end{proposition}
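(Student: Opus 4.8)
The plan is to reduce the closedness of $A_{\alpha,\beta}^p(\Omega,C\ell_{0,n})$ to the already-established closedness of the monogenic Bergman space $A^p(\Omega,C\ell_{0,n})$ (quoted above from \cite[Prop.\ 3.73]{GuSpr1997}), by transporting it through the operator $S_{\alpha,\beta}$ via Proposition \ref{lemma:transformationS}. The key structural facts I would marshal first are: (i) $S_{\alpha,\beta}=I-T_{\Omega}[\alpha C+\beta I]$ is a bounded (continuous) operator on $L^p(\Omega,C\ell_{0,n})$, which follows because $T_{\Omega}$ is bounded on $L^p$ and $\alpha,\beta\in L^{\infty}(\Omega)$ make the multiplication-and-conjugation operator $\alpha C+\beta I$ bounded on $L^p$; and (ii) by Proposition \ref{lemma:transformationS}, $w\in A_{\alpha,\beta}^p(\Omega)$ if and only if $S_{\alpha,\beta}[w]\in A^p(\Omega)$, i.e.
\[
A_{\alpha,\beta}^p(\Omega,C\ell_{0,n})=S_{\alpha,\beta}^{-1}\bigl(A^p(\Omega,C\ell_{0,n})\bigr).
\]

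First I would argue that the cleanest route is to show $S_{\alpha,\beta}$ is a bounded linear bijection of $L^p(\Omega,C\ell_{0,n})$ with bounded inverse (an isomorphism onto $L^p$), after which the closedness is immediate: the preimage of the closed set $A^p(\Omega)$ under a continuous map is closed. To get invertibility I would exploit that $T_{\Omega}$ is a weakly singular (compact) integral operator on $L^p(\Omega)$ for a bounded domain, so $T_{\Omega}[\alpha C+\beta I]$ is a compact operator; hence $S_{\alpha,\beta}=I-T_{\Omega}[\alpha C+\beta I]$ is a Fredholm operator of index zero, and it is invertible once it is injective. Injectivity follows from Proposition \ref{lemma:transformationS} together with a uniqueness argument: if $S_{\alpha,\beta}[w]=0$ then $w$ is a Vekua solution with $S_{\alpha,\beta}[w]=0$, and unwinding the definition gives $w=T_{\Omega}[\alpha\overline w+\beta w]$, so $Dw=\alpha\overline w+\beta w$ with the additional constraint that the monogenic image vanishes; the standard boundary-behaviour of $T_{\Omega}$ (that $T_{\Omega}$ maps into functions with controlled trace and is a right inverse of $D$) forces $w=0$.

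If invertibility turns out to be delicate to establish in full $L^p$ generality, I would fall back on a direct sequential argument that sidesteps it. Take a sequence $w_k\in A_{\alpha,\beta}^p(\Omega)$ with $w_k\to w$ in $L^p(\Omega,C\ell_{0,n})$. By continuity of $S_{\alpha,\beta}$ (fact (i)), $S_{\alpha,\beta}[w_k]\to S_{\alpha,\beta}[w]$ in $L^p$; by Proposition \ref{lemma:transformationS} each $S_{\alpha,\beta}[w_k]\in A^p(\Omega)$, and since $A^p(\Omega)$ is closed in $L^p$ the limit $S_{\alpha,\beta}[w]$ lies in $A^p(\Omega)$; applying the converse direction of Proposition \ref{lemma:transformationS} then yields $w\in A_{\alpha,\beta}^p(\Omega)$. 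This version needs only continuity of $S_{\alpha,\beta}$ and the equivalence, not its invertibility, and is the argument I would ultimately write down.

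The main obstacle is the $L^p$ boundedness (and, for the sharper isomorphism route, the invertibility) of $S_{\alpha,\beta}$. The boundedness of $T_{\Omega}$ on $L^p(\Omega,C\ell_{0,n})$ for $1<p<\infty$ is a Calder\'on--Zygmund-type statement that is standard but should be cited carefully, and one must confirm that the conjugation operator $C$, being an isometric $\R$-linear involution on $C\ell_{0,n}$, does not disturb the $L^p$ bounds (it does not, since $|\overline w|=|w|$ pointwise). The subtlety worth flagging is that when $\alpha\not\equiv 0$ the space $A_{\alpha,\beta}^p(\Omega)$ is only a real-linear subspace, so all closedness statements are understood over $\R$; since $S_{\alpha,\beta}$ is $\R$-linear and continuous, this causes no difficulty, but I would state the argument in the real-linear category to be consistent with the earlier remark in the text.
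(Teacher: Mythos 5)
Your final sequential argument coincides with the paper's own proof: continuity of $S_{\alpha,\beta}$ on $L^p(\Omega,C\ell_{0,n})$, the equivalence of Proposition \ref{lemma:transformationS} applied in both directions, and the closedness of the monogenic space $A^p(\Omega,C\ell_{0,n})$. The Fredholm/invertibility route you sketch first is both unnecessary and shaky (the injectivity step ``forces $w=0$'' is unjustified; the paper itself only obtains invertibility of $S_{\alpha,\beta}$ under the smallness condition of Remark \ref{rmk:Neumann-serie}), but since you explicitly discard it in favor of the sequential argument, what you would write down is correct and is essentially the paper's proof.
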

\begin{proof}
  Let $\left\{w_n\right\}\subseteq A_{\alpha,\beta}^p(\Omega,C\ell_{0,n})$ be a sequence such that $w_n\rightarrow w$ in $L^p(\Omega,C\ell_{0,n})$. By Proposition \ref{lemma:transformationS}, $S_{\alpha,\beta}[w_n]\in A^p(\Omega,C\ell_{0,n})$. Further, it converges to $S_{\alpha,\beta}[w]$ in $L^p(\Omega,C\ell_{0,n})$.
Indeed, by  continuity of the Teodorescu transform $T_{\Omega}$,
%\colon \JLm{L^p(\D)\to W^{1,p}(\D)}$,  %\cite{GuHaSpr2008},
    we have:
\[ %\begin{align*}
\|S_{\alpha}[w_n]-S_{\alpha,\beta}[w]\|_{L^p(\Omega)} \leq \|S_{\alpha,\beta}\| \|w_n-w\|_{L^p(\Omega)} \, ,
\] %\end{align*}
where $\|S_{\alpha,\beta}\|$ is the operator norm from $L^p(\D)$ to itself.
Consequently, by the closedness of the classical Bergman spaces $A^p(\Omega, C\ell_{0,n})$ in $L^p(\Omega,C\ell_{0,n})$ (see Lemma \ref{lemma:Hodge-monogenic}), we get that $S_{\alpha,\beta}[w]\in A^p(\Omega,C\ell_{0,n})$. Again by Proposition \ref{lemma:transformationS}, $w\in A_{\alpha,\beta}^p(\Omega,C\ell_{0,n})$, which finish the proof.
\end{proof}
\begin{corollary}\label{cor:reflexive-separable}
Let $1< p<\infty$ and let $\alpha, \beta\in L^{\infty}(\Omega,C\ell_{0,n})$. Then $A_{\alpha,\beta}^p(\Omega,C\ell_{0,n})$ is a reflexive and separable Banach space.
\end{corollary}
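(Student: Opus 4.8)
The plan is to reduce the statement entirely to classical Banach space theory, using the closedness already established in Proposition~\ref{prop:Bergman-generalized-closed}. The guiding observation is that reflexivity and separability are properties of the underlying real Banach space and are insensitive to the one-sided $C\ell_{0,n}$-module structure; so I would first dispose of the ambient space $L^p(\Omega,C\ell_{0,n})$ and then pass to the subspace.

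First I would check that $L^p(\Omega,C\ell_{0,n})$ is itself reflexive and separable for $1<p<\infty$. Since $C\ell_{0,n}$ is a $2^n$-dimensional real algebra with basis $\{e_A\}$, every $w\in L^p(\Omega,C\ell_{0,n})$ is written $w=\sum_A w_A\,e_A$ with $w_A\in L^p(\Omega,\R)$, and the assignment $w\mapsto (w_A)_A$ is a real-linear isomorphism onto the finite direct sum $\bigoplus_A L^p(\Omega,\R)$ endowed with an equivalent norm. Each scalar factor $L^p(\Omega,\R)$ is reflexive for $1<p<\infty$ (its dual is $L^q(\Omega,\R)$ with $\tfrac1p+\tfrac1q=1$) and separable, because $\Omega\subset\R^n$ is a bounded, hence $\sigma$-finite and separable, measure space. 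A finite direct sum of reflexive (respectively separable) Banach spaces is again reflexive (respectively separable), whence $L^p(\Omega,C\ell_{0,n})$ enjoys both properties.

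Next I would invoke Proposition~\ref{prop:Bergman-generalized-closed}, which asserts that $A_{\alpha,\beta}^p(\Omega,C\ell_{0,n})$ is a closed subspace of $L^p(\Omega,C\ell_{0,n})$; in particular it is a Banach space in the inherited norm, viewed as a closed subspace over $\R$ when $\alpha\ne 0$, as noted after \eqref{eq:Vekua_equation}. Two standard inheritance results then conclude the argument: a closed linear subspace of a reflexive Banach space is reflexive, and every subset of a separable metric space is separable. Applying these to the closed subspace $A_{\alpha,\beta}^p(\Omega,C\ell_{0,n})\subset L^p(\Omega,C\ell_{0,n})$ yields both claims simultaneously.

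I do not expect a genuine obstacle in this corollary; the only point requiring attention is conceptual rather than technical, namely to recognize that reflexivity and separability are invariants of the real Banach-space structure and are unaffected by the $C\ell_{0,n}$-module structure. Once the ambient space is handled as above and closedness is quoted from Proposition~\ref{prop:Bergman-generalized-closed}, the conclusion is immediate.
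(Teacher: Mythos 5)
Your proof is correct and follows essentially the same route as the paper: closedness from Proposition~\ref{prop:Bergman-generalized-closed} combined with the inheritance of reflexivity and separability by (closed) subspaces. The only difference is that you explicitly verify that the ambient space $L^p(\Omega,C\ell_{0,n})$ is reflexive and separable via the component-wise identification with $\bigoplus_A L^p(\Omega,\R)$, a step the paper leaves implicit.
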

\begin{proof}
The proof is straightforward from Proposition \ref{prop:Bergman-generalized-closed} and the fact that subspaces of reflexive (separable) spaces are reflexive (separable) as well.
\end{proof}
\begin{remark}\label{rmk:Neumann-serie}
If $\|\alpha\|_{L^{\infty}}+\|\beta \|_{L^{\infty}}< 1/\|T_{\Omega}\|$, where $\|T_{\Omega}\|$ is the operator norm from $L^p(\Omega,C\ell_{0,n})$ to itself, then $S_{\alpha,\beta}$ has a bounded inverse operator in $L^p(\Omega,C\ell_{0,n})$ $1<p<\infty$, namely $S_{\alpha,\beta}^{-1}$. More precisely, the Neumann series \cite[Prop.\ 9.13]{Teschl2012}, \cite[Th.\ 1.3]{Kub2012} of the operator $S_{\alpha,\beta}^{-1}$ is given by
\begin{align}\label{eq:Neumann-series}
   S_{\alpha,\beta}^{-1}=\sum_{k=0}^{\infty} 
	 (T_{\Omega}(\alpha C+\beta I))^k.
\end{align}
\end{remark}
The hypothesis $\|T_{\Omega}[\alpha C+\beta I]\|<1$ is enough to guarantee the existence of the above Neumann series \eqref{eq:Neumann-series}, but we prefer to require a more restrictive condition $\|\alpha\|_{L^{\infty}}+\|\beta \|_{L^{\infty}}< 1/\|T_{\Omega}\|$ in terms of the functions $\alpha$ and $\beta$ of the Vekua equation.

It is not difficult to construct examples satisfying the hypothesis of Remark \ref{rmk:Neumann-serie}. It is well-known that the operator norm of the Teodorescu operator $T_{\Omega}$ from $L^p(\Omega, C\ell_{0,n})$ to itself \cite[Cor.\ 3.17]{GuSpr1997} $1<p<\infty$ satisfies 
\[
  \|T_{\Omega}\|\leq \text{diam }\Omega.
\]

%\[
 %  \|T_{\Omega}\|\leq C(n,p) |\Omega|^{\frac{p-n}{p}}, \quad C(n,p)=\sigma_n^{\left(\frac{p-n}{p}+\frac{p-1}{p}\right)}\left(\frac{1}{n(\frac{p-n}{p-1})}\right)^{\frac{p-1}{p}}\quad p>n,
%\]

%For instance, taking $f(\vec x)=e^{|\vec x|^2/2(\text{diam }\Omega)^2}$, $\alpha=\nabla f/f$ and $\beta=0$, we have that $\alpha$ satisfies the hypothesis of Remark \ref{rmk:Neumann-serie}.
%$\|\alpha\|_{L^{\infty}}=1/\text{diam }\Omega$. Thus, 
%\JLm{For instance, taking $f(z)=e^{|z|^2/8\pi^3}$, we have $\|\alpha\|_{L^{\infty}(\D)}=1/4\pi^3$. Thus, $\|\alpha+\beta \|_{L^{\infty}(\D)}< 1/\|T_{\D}\|$ as we desired.}
The fact that $S_{\alpha,\beta}$ is an invertible operator according to Remark \ref{rmk:Neumann-serie} will be the key for many of the upcoming results. Now, we will see that $S_{\alpha,\beta}^{-1}$ sends left-monogenic function into solutions of the Vekua equation \eqref{eq:Vekua_equation} in $\Omega$.
\begin{proposition}\label{prop:inverse}
If $\|\alpha\|_{L^{\infty}}+\|\beta \|_{L^{\infty}}< 1/\|T_{\Omega}\|$, then the operator $S_{\alpha,\beta}^{-1}$ defined by the Neumann series \eqref{eq:Neumann-series} is a bounded operator and 
\[
S_{\alpha,\beta}^{-1}\colon A^p(\Omega,C\ell_{0,n}) \to A_{\alpha,\beta}^p(\Omega,C\ell_{0,n}).
\]
\end{proposition}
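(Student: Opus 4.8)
The plan is to reduce the statement to two facts already in hand: the convergence of the Neumann series from Remark \ref{rmk:Neumann-serie}, which yields boundedness, and the transformation characterization of Proposition \ref{lemma:transformationS}, which yields the mapping property. No new machinery should be required.

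First I would verify that the hypothesis forces $T_{\Omega}[\alpha C+\beta I]$ to be a strict contraction on $L^p(\Omega,C\ell_{0,n})$. Since the Clifford conjugation $C$ and the identity $I$ preserve the $L^p$-norm, the multiplication operator $\alpha C+\beta I$ has operator norm at most $\|\alpha\|_{L^{\infty}}+\|\beta\|_{L^{\infty}}$; composing with the bounded Teodorescu transform and invoking the hypothesis $\|\alpha\|_{L^{\infty}}+\|\beta\|_{L^{\infty}}<1/\|T_{\Omega}\|$ gives $\|T_{\Omega}[\alpha C+\beta I]\|<1$. By the Neumann-series argument recalled in Remark \ref{rmk:Neumann-serie}, the operator $S_{\alpha,\beta}=I-T_{\Omega}[\alpha C+\beta I]$ is then boundedly invertible, with $S_{\alpha,\beta}^{-1}=\sum_{k=0}^{\infty}(T_{\Omega}[\alpha C+\beta I])^k$ a bounded operator on $L^p(\Omega,C\ell_{0,n})$. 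This settles the boundedness claim.

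For the mapping property, I would take an arbitrary left-monogenic $\phi\in A^p(\Omega,C\ell_{0,n})$ and set $w:=S_{\alpha,\beta}^{-1}[\phi]\in L^p(\Omega,C\ell_{0,n})$. Applying $S_{\alpha,\beta}$ gives $S_{\alpha,\beta}[w]=\phi$, which is monogenic by assumption, that is $S_{\alpha,\beta}[w]\in A^p(\Omega,C\ell_{0,n})$. Proposition \ref{lemma:transformationS} then yields at once $w\in A_{\alpha,\beta}^p(\Omega,C\ell_{0,n})$, so $S_{\alpha,\beta}^{-1}$ indeed carries $A^p(\Omega,C\ell_{0,n})$ into $A_{\alpha,\beta}^p(\Omega,C\ell_{0,n})$.

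I do not anticipate a serious obstacle, as both ingredients are already established; the only point needing mild care is the estimate $\|\alpha C+\beta I\|\le\|\alpha\|_{L^{\infty}}+\|\beta\|_{L^{\infty}}$, which rests on the isometry of conjugation on $L^p$. One could additionally note that, because Proposition \ref{lemma:transformationS} is an equivalence and $S_{\alpha,\beta}$ is a bijection on $L^p(\Omega,C\ell_{0,n})$, the restriction of $S_{\alpha,\beta}^{-1}$ is in fact a bijection of $A^p(\Omega,C\ell_{0,n})$ onto $A_{\alpha,\beta}^p(\Omega,C\ell_{0,n})$ with inverse the restriction of $S_{\alpha,\beta}$; this stronger statement is not needed for the claim but will be convenient later.
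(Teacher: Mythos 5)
Your proposal is correct, but it reaches the mapping property by a different route than the paper. The paper's own proof applies the Moisil--Teodorescu operator $D$ term-by-term to the Neumann series \eqref{eq:Neumann-series}: since $Du=0$ and $DT_{\Omega}=I$, the series telescopes and one reads off directly that $D S_{\alpha,\beta}^{-1}[u]=\alpha\,\overline{S_{\alpha,\beta}^{-1}[u]}+\beta\, S_{\alpha,\beta}^{-1}[u]$, i.e.\ the image satisfies the Vekua equation; boundedness is then deduced from the Inverse Mapping Theorem. You instead get boundedness for free from the geometric-series bound and obtain the mapping property in one line by applying the equivalence of Proposition \ref{lemma:transformationS} to $w=S_{\alpha,\beta}^{-1}[\phi]$, using that the Neumann series furnishes a two-sided inverse so that $S_{\alpha,\beta}[w]=\phi\in A^p(\Omega,C\ell_{0,n})$. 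Your argument is arguably cleaner: it avoids the interchange of $D$ with an infinite sum, which the paper performs without explicit justification, and it reuses a result already established; what the paper's computation buys in exchange is an explicit, self-contained verification that the image solves the Vekua equation, independent of the equivalence. One small caveat common to both your write-up and the paper's Remark \ref{rmk:Neumann-serie}: the estimate $\|\alpha C+\beta I\|\le\|\alpha\|_{L^{\infty}}+\|\beta\|_{L^{\infty}}$ tacitly uses pointwise submultiplicativity $|\alpha(\vec x)\,\overline{w}(\vec x)|\le|\alpha(\vec x)|\,|w(\vec x)|$ of the Clifford norm, which in $C\ell_{0,n}$ holds only up to a dimensional constant; since the paper adopts this convention in stating the remark you rely on, this is not a gap in your proof relative to the paper, but it is worth being aware of.
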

\begin{proof}
Let $u\in A^p(\Omega,C\ell_{0,n})$. Applying the Moisil-Teodorescu operator $D$ to the Neumann series \eqref{eq:Neumann-series} we have
\begin{align*}
D S_{\alpha,\beta}^{-1}[u]&=D\left(u+T_{\Omega}[\alpha \overline{u}+\beta u]+T_{\Omega}[\alpha \overline{T_{\Omega}[\alpha \overline{u}+\beta u]}+\beta T_{\Omega}[\alpha \overline{u}+\beta u]]+\cdots \right)\\
&=\alpha \overline{u}+\beta u + \alpha \overline{T_{\Omega}[\alpha \overline{u}+\beta u]} +\beta T_{\Omega}[\alpha \overline{u}+\beta u]+\cdots\\
&=\alpha \overline{S_{\alpha,\beta}^{-1}[u]}+\beta S_{\alpha,\beta}^{-1}[u].
\end{align*}
Therefore $S_{\alpha,\beta}^{-1}[u]\in A_{\alpha,\beta}^p(\Omega,C\ell_{0,n})$ as we desired. The boundedness of $S_{\alpha,\beta}^{-1}$ is a consequence of the Inverse Mapping Theorem \cite[Th.\ 1.1]{Kub2012}.
\end{proof}
The regularity of the solutions of the Vekua equation \eqref{eq:Vekua_equation} depends on the regularity of the coefficients $\alpha$ and $\beta$. At least under this Neumann series approach, if $\|\alpha\|_{L^{\infty}}+\|\beta \|_{L^{\infty}}< 1/\|T_{\Omega}\|$ and $w\in A_{\alpha,\beta}^p(\Omega,C\ell_{0,n})$, then there exists $u\in A^p(\Omega,C\ell_{0,n})$ such that $w=S_{\alpha,\beta}^{-1}[u]$. That is, $w=S_{\alpha,\beta}^{-1}[u]=\sum_{k=0}^{\infty} (T_{\Omega}(\alpha C+\beta I))^k[u]$. Due to $u$ is harmonic component-wise, then $u$ is smooth. Consequently, if $\alpha, \beta\in C^k(\Omega)$ $k\geq 0$, by the regularity of the Teodorescu transform $(T_{\Omega}(\alpha C+\beta I))^j[u]\in C^{k+1}(\Omega)$ for all $j\geq 1$. Thus, $w\in C^{k+1}(\Omega)$. Similarly, in the weak sense, if $\alpha, \beta\in W^{1,m}(\Omega)$ $1<m<\infty$, then $w\in W^{1, m+1}(\Omega)$.
%by the regularity of the Teodorescu transform $(T_{\Omega}(\alpha C+\beta I))^j[u]\in W^{1, m+1}(\Omega)$ for all $j\geq 1$. Thus, $w\in W^{1, m+1}(\Omega)$.
\color{black}
\section{Hodge decomposition of $A_{\alpha,\beta}^2(\Omega,C\ell_{0,n})$}\label{sec:Hodge}
%Recently in \cite{Campos2012} a Bergman-type reproducing kernel for the elliptic operator $\div(\sigma \,\grad u) +q \, u$ was constructed using pseudo-analytic formal powers.
Let $M^{\alpha}$ be the right-hand side multiplication operator, $M^{\alpha}[w]:=w\, \alpha$, we will see in Theorem \ref{th:Hodge-decomposition} somehow this multiplication operator is associated with the orthogonal complement of the generalized Vekua space $A_{\alpha,\beta}^2(\Omega, C\ell_{0,n})$.

Let us define the orthogonal complement of $A_{\alpha,\beta}^2(\Omega,C\ell_{0,n})$ by
\[
A_{\alpha,\beta}^2(\Omega,C\ell_{0,n})^{\bot}=\{u\in L^2(\Omega,C\ell_{0,n}) \colon \langle w, u\rangle_{0,L^2}=0, \forall w\in A_{\alpha,\beta}^2(\Omega,C\ell_{0,n})\},
\]
with respect to the scalar product
\begin{equation}\label{eq:scalar-product}
   \langle u,v\rangle_{0,L^2}=\text{Sc }\int_{\Omega} \overline{u}v\, d\sigma, 
\end{equation}
for all $u, v\in L^2(\Omega,C\ell_{0,n})$. Observe that the above-mentioned scalar product is precisely the scalar part of the inner product defined in \eqref{eq:inner-product}. It is worth mentioning that both inner products generate the same topology. 
Unlike the inner product \eqref{eq:inner-product}, we can easily see that the scalar product \eqref{eq:scalar-product} is also $\mathbb{R}$-linear in the first argument on the right and left-hand side.
%that means
%\[
%\langle af+h,g\rangle_{0,L^2}=a\langle f,g\rangle_{0,L^2}+\langle h,g\rangle_{0,L^2},\qquad \langle fa+h,g\rangle_{0,L^2}=\langle f,g\rangle_{0,L^2}a+\langle h,g\rangle_{0,L^2},
%\]
%for all $f,g,h\in L^2(\Omega,C\ell_{0,n})$, whenever $a\in \mathbb{R}$. 
In summary, thanks to Proposition \ref{prop:Bergman-generalized-closed} we have that $A_{\alpha,\beta}^2(\Omega,C\ell_{0,n})$ is a Hilbert subspace over $\mathbb{R}$ under the scalar product \eqref{eq:scalar-product}.
An important property of the inner product \eqref{eq:inner-product} is the fact that $\langle u, v\rangle_{0,L^2}=\langle v, u\rangle_{0,L^2}$, for all $u,v\in L^2(\Omega,C\ell_{0,n})$.
We will use some well-known facts about the Hodge decomposition of the Bergman space \eqref{eq:Hodge-1} and the following factorization 
\begin{align}\label{eq:factorization-complete}
   \nonumber (D-\alpha C-\beta)(D-M^{\alpha}C-\overline{\beta})h&=(D-\alpha C-\beta)(Dh-\overline{h}\alpha -\overline{\beta} h)\\
   \nonumber  &=-\Delta h-D( \overline{h}\alpha)-D(\overline{\beta} h)+\alpha(\overline{h}D+\overline{\alpha}h+\overline{h}\beta)\\
   &-\beta (Dh-\overline{h}\alpha - \overline{\beta}h )  
\end{align}
In particular, if $h=h_0$ is a scalar-valued function, then taking the scalar part of the above factorization \eqref{eq:factorization-complete} we get 
\begin{equation}\label{eq:factorization-previous}
    \left(-\Delta+|\alpha|^2+|\beta|^2-\text{Sc }(D\alpha+D\overline{\beta}-2\alpha\beta)\right)h_0=\text{Sc } (D-\alpha C-\beta)(D-M^{\alpha}C-\overline{\beta})h_0.
\end{equation}
It is well-known that for every $a, b\in C\ell_{0,n}$, the scalar part of the product is given by $\text{Sc }(\overline{a}b)=\text{Sc }(a\overline{b})=a\cdot b$, where the scalar product is taken as the one for vectors $a,b\in \mathbb{R}^{2^n}$. Therefore, our factorization \eqref{eq:factorization-previous} can be rewritten as
\begin{equation}\label{eq:factorization}
    \left(-\Delta+|\alpha|^2+|\beta|^2+\div \vec \alpha-\div \vec \beta+2\alpha\cdot \overline{\beta}\right)h_0=\text{Sc } (D-\alpha C-\beta)(D-M^{\alpha}C-\overline{\beta})h_0,
\end{equation}
where $\vec \alpha=\text{Vec }\alpha$ and $\vec \beta=\text{Vec }\beta$.
Now, let us come back to the particular case when $\alpha=\nabla f/f$ and $\beta=\nabla g/g$, with $f, g$ be $W^{1,\infty}(\Omega)$ non-vanishing scalar functions, then the right-hand side of \eqref{eq:factorization} reduces to the Schrödinger equation
\begin{equation}\label{eq:Schrodinger-1}
   \left(-\Delta+\dfrac{\Delta f}{f}-\dfrac{\Delta g}{g}+2\dfrac{\nabla g}{g}\cdot \left(\dfrac{\nabla g}{g}-\dfrac{\nabla f}{f}\right)\right)h_0=\text{Sc } (D-\alpha C-\beta)(D-M^{\alpha}C-\overline{\beta})h_0.
\end{equation}
Moreover, if $\alpha=\nabla f/f$ and $\beta\equiv 0$, then \eqref{eq:factorization} reduces to
\[
   \left(-\Delta+\dfrac{\Delta f}{f}\right)h_0=\text{Sc } (D-\alpha C-\beta)(D-M^{\alpha}C-\overline{\beta})h_0.
\]
which is turn is the second-order partial differential equation satisfied by the scalar part of solutions of the Vekua equation (see Corollary \ref{cor:conductividad-Schrodinger}).
%Due to $\text{Sc }(D(\overline{\beta}h_0))=-\text{Sc }(\beta Dh_0)$.

%First notice that $S_{\alpha,\beta}$ has a linear and an anti-linear part, $I-T_{\D}[\beta I]$ and $T_{\D}[\alpha C_{\C}]$, respectively. 
%By definition, the adjoint of an anti-linear operator $T\colon H\to H$ acts as follows: $\langle Tx, y\rangle=\overline{\langle x,T^*y\rangle}$ for every $x,y\in H$. 
\begin{proposition}\label{prop:empty}
Let $\Omega$ be a Lipschitz domain, and $\alpha, \beta\in W^{1,\infty}(\Omega, \mathbb{R})$. Then
\[
  A_{\alpha,\beta}^2(\Omega,C\ell_{0,n})\cap (D-M^{\alpha}C-\overline{\beta}) W_0^{1,2}(\Omega,\mathbb{R})=\{0\}.
\]
\end{proposition}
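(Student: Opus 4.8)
The plan is to prove the slightly stronger statement that every $w\in A_{\alpha,\beta}^2(\Omega,C\ell_{0,n})$ is orthogonal, with respect to the scalar product $\langle\cdot,\cdot\rangle_{0,L^2}$, to every element of $(D-M^{\alpha}C-\overline{\beta})W_0^{1,2}(\Omega,\R)$; the proposition then follows at once, since for $w$ lying in the intersection one obtains $\langle w,w\rangle_{0,L^2}=\|w\|_{L^2(\Omega)}^2=0$, whence $w=0$ by positive definiteness. A first simplification I would record is that, because $\alpha,\beta$ are \emph{real scalar} functions and $h\in W_0^{1,2}(\Omega,\R)$ is real scalar, the multiplication operators collapse: $M^{\alpha}C[h]=\overline{h}\,\alpha=\alpha h$ and $\overline{\beta}h=\beta h$, so that $(D-M^{\alpha}C-\overline{\beta})h=Dh-(\alpha+\beta)h$.

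Fix $w\in A_{\alpha,\beta}^2(\Omega,C\ell_{0,n})$ and set $v=(D-M^{\alpha}C-\overline{\beta})h=Dh-(\alpha+\beta)h$. I would begin from
\[
\langle w,v\rangle_{0,L^2}=\text{Sc }\int_{\Omega}\overline{w}\,\big(Dh-(\alpha+\beta)h\big)\,d\sigma,
\]
and integrate the term $\overline{w}\,Dh$ by parts through the Clifford--Stokes formula $\int_{\Omega}\big[(\overline{w}D)h+\overline{w}(Dh)\big]\,d\sigma=\int_{\partial\Omega}\overline{w}\,\vec\eta\,h\,ds$. Since $h$ has vanishing trace the boundary term drops, giving $\int_{\Omega}\overline{w}\,Dh\,d\sigma=-\int_{\Omega}(\overline{w}D)h\,d\sigma$. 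The key algebraic identity is that conjugation turns the right Dirac operator into minus the conjugated left one: from $\overline{e_i}=-e_i$ one gets $\overline{Dw}=\sum_i(\partial_i\overline{w})(-e_i)=-(\overline{w}D)$, hence $\int_{\Omega}\overline{w}\,Dh\,d\sigma=\int_{\Omega}\overline{Dw}\,h\,d\sigma$.

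Now I would insert the Vekua equation $Dw=\alpha\overline{w}+\beta w$. Conjugating and using that $\alpha,\beta$ are real gives $\overline{Dw}=w\alpha+\overline{w}\beta=\alpha w+\beta\overline{w}$, so that $\int_{\Omega}\overline{w}\,Dh\,d\sigma=\int_{\Omega}(\alpha w+\beta\overline{w})\,h\,d\sigma$. Substituting this back and cancelling the two $\beta$-terms leaves
\[
\langle w,v\rangle_{0,L^2}=\text{Sc }\int_{\Omega}\alpha\,h\,(w-\overline{w})\,d\sigma=\int_{\Omega}\alpha\,h\,\text{Sc }(w-\overline{w})\,d\sigma,
\]
where $\alpha h$ was pulled out as a scalar factor. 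Since $\text{Sc }\overline{w}=\text{Sc }w$ for every Clifford number, the integrand vanishes identically and $\langle w,v\rangle_{0,L^2}=0$, which is the claimed orthogonality; specializing $v=w$ for $w$ in the intersection finishes the proof.

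The step I expect to be the main obstacle is the rigorous justification of the integration by parts, since $w$ is only an $L^2$ distributional solution of the Vekua equation. I would argue by density: $C_c^{\infty}(\Omega,\R)$ is dense in $W_0^{1,2}(\Omega,\R)$, the identity $\int_{\Omega}\overline{w}\,D\varphi\,d\sigma=\int_{\Omega}\overline{Dw}\,\varphi\,d\sigma$ holds for $\varphi\in C_c^{\infty}$ directly from the distributional definition of $Dw$ (note that $Dw=\alpha\overline{w}+\beta w\in L^2$ because $\alpha,\beta\in L^{\infty}$), and both sides are continuous in $\varphi$ for the $W^{1,2}$-norm, so the identity passes to the limit and the boundary term is legitimately absent. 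Apart from this analytic point the argument is purely algebraic, resting only on $\overline{e_i}=-e_i$, the reality of $\alpha,\beta,h$, and the invariance of the scalar part under conjugation.
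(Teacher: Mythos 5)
Your proof is correct, but it takes a genuinely different route from the paper's own proof of this proposition. The paper argues through the factorization \eqref{eq:factorization}: writing $w=(D-M^{\alpha}C-\overline{\beta})h$ with $h\in W_0^{1,2}(\Omega,\mathbb{R})$ and using $(D-\alpha C-\beta)w=0$, the scalar function $h$ solves the Schr\"odinger equation $\left(-\Delta+(\alpha+\beta)^2\right)h=0$ with vanishing trace (for real scalar $\alpha,\beta$ the potential in \eqref{eq:factorization} collapses to $(\alpha+\beta)^2\ge 0$), so the maximum principle forces $h\equiv 0$ and hence $w=0$. You instead prove the stronger statement that $(D-M^{\alpha}C-\overline{\beta})W_0^{1,2}(\Omega,\mathbb{R})$ is orthogonal to $A^2_{\alpha,\beta}(\Omega,C\ell_{0,n})$ under \eqref{eq:scalar-product} and conclude by positive definiteness; this is in substance the containment \eqref{eq:contention-1} that the paper establishes only later, inside the proof of Theorem \ref{th:Hodge-decomposition} (the computation \eqref{eq:computations-orthogonal}), here specialized to real scalar $\alpha,\beta$ and scalar $h$. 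Each approach buys something: yours avoids the maximum principle entirely (no positivity of the potential, no regularity discussion), handles the distributional setting cleanly --- your density justification is sound, since for test functions the identity $\int_{\Omega}\overline{w}\,D\varphi\,d\sigma=\int_{\Omega}\overline{Dw}\,\varphi\,d\sigma$ is exactly the conjugated distributional form of the Vekua equation and both sides are $W^{1,2}$-continuous in $\varphi$ --- and it yields precisely the orthogonality that the Hodge decomposition needs. The paper's route gives the extra information that the preimage $h$ itself vanishes, not merely $w$, and it exhibits the link with the Schr\"odinger operator that motivates viewing $D-M^{\alpha}C-\overline{\beta}$ as the adjoint Vekua operator; note that its use of the maximum principle tacitly relies on the nonnegativity $(\alpha+\beta)^2\ge 0$, which holds exactly because $\alpha,\beta$ are real-valued in this proposition, whereas your cancellation of the $\beta$-terms and the identity $\text{Sc}\,(w-\overline{w})=0$ make the same hypothesis do its work algebraically.
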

\begin{proof}
Let $w\in A_{\alpha,\beta}^2(\Omega,C\ell_{0,n})\cap(D-M^{\alpha}C-\overline{\beta}) W_0^{1,2}(\Omega,\mathbb{R})$. Then there exists $h\in W^{1,2}(\Omega,\mathbb{R})$ such that $w=(D-M^{\alpha}C-\overline{\beta})h\in A_{\alpha,\beta}^2(\Omega,C\ell_{0,n})$ with $h|_{\partial\Omega}=0.$ Consequently, 
\begin{align*}
     (D-\alpha C-\beta)(D-M^{\alpha}C-\overline{\beta})h=0, \quad \text{ in }\Omega.
\end{align*}
Using the factorization \eqref{eq:factorization}, we get that $h$ is a solution of the following Schr\"{o}dinger equation with zero vanishing trace
\[
\left(-\Delta+|\alpha|^2+|\beta|^2+\div \vec \alpha-\div \vec \beta+2\alpha\cdot \overline{\beta}\right)h=0, \quad \text{ in }\Omega.
\]
By the uniqueness of the solution of the Dirichlet problem, then $h\equiv 0$ as we desired.
%\comment{¿Siempre es elíptico?}
\end{proof}
An alternative proof using the adjoint operator $S_{\alpha,\beta}^*$ is the following

\begin{proposition}\label{prop:empty-2}
Let $\alpha, \beta\in L^{\infty}(\Omega,C\ell_{0,n})$ such that $S_{\alpha,\beta}$ be an injective operator in $L^2(\Omega,C\ell_{0,n})$. Then
\[
  A_{\alpha,\beta}^2(\Omega,C\ell_{0,n})\cap (D-M^{\alpha}C-\overline{\beta}) W_0^{1,2}(\Omega,C\ell_{0,n})=\{0\}.
\]
\end{proposition}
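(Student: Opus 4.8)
The plan is to identify $D-M^{\alpha}C-\overline{\beta}$ as the exact adjoint of the Vekua operator and then collapse the intersection onto the orthogonality already supplied by the monogenic Hodge decomposition \eqref{eq:Hodge-1}. The whole argument takes place in the real Hilbert space $\bigl(L^2(\Omega,C\ell_{0,n}),\langle\cdot,\cdot\rangle_{0,L^2}\bigr)$ from \eqref{eq:scalar-product}, and the only external inputs are Proposition \ref{lemma:transformationS} and Lemma \ref{lemma:Hodge-monogenic}.

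First I would record the algebraic identity $DS_{\alpha,\beta}=D-\alpha C-\beta$, which is immediate from $DT_{\Omega}=I$ applied to the definition \eqref{eq:isomorphism}. Taking the adjoint with respect to \eqref{eq:scalar-product}, and using that $D$ is formally self-adjoint on functions of vanishing trace (integration by parts, with the boundary term dropping because the argument lies in $W_0^{1,2}$), together with the pointwise adjoints $(\alpha C)^{*}=M^{\alpha}C$ and $(\beta I)^{*}=\overline{\beta}I$ --- both consequences of the cyclicity of $\text{Sc }$ and of $\text{Sc }(x)=\text{Sc }(\overline{x})$ --- I obtain the key identity
\[
(D-M^{\alpha}C-\overline{\beta})h=S_{\alpha,\beta}^{*}[Dh],\qquad h\in W_0^{1,2}(\Omega,C\ell_{0,n}).
\]
Equivalently, one verifies $T_{\Omega}^{*}D=I$ on $W_0^{1,2}$ directly and substitutes into $S_{\alpha,\beta}^{*}=I-(M^{\alpha}C+\overline{\beta}I)T_{\Omega}^{*}$. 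In words: $D-M^{\alpha}C-\overline{\beta}$ is precisely $(D-\alpha C-\beta)^{*}$ read through the factorization $D-\alpha C-\beta=DS_{\alpha,\beta}$.

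Now let $w$ lie in the intersection and write $w=(D-M^{\alpha}C-\overline{\beta})h=S_{\alpha,\beta}^{*}[Dh]$ with $h\in W_0^{1,2}$. Because $w\in A_{\alpha,\beta}^{2}(\Omega,C\ell_{0,n})$, Proposition \ref{lemma:transformationS} gives $S_{\alpha,\beta}[w]\in A^{2}(\Omega,C\ell_{0,n})$. Moving the adjoint across the scalar product,
\[
\langle w,w\rangle_{0,L^2}=\langle S_{\alpha,\beta}^{*}[Dh],w\rangle_{0,L^2}=\langle Dh,\,S_{\alpha,\beta}[w]\rangle_{0,L^2}.
\]
Since $Dh\in DW_0^{1,2}(\Omega,C\ell_{0,n})$ and, by Lemma \ref{lemma:Hodge-monogenic}, this space is orthogonal to $A^{2}(\Omega,C\ell_{0,n})$ with respect to \eqref{eq:inner-product}, the full Clifford-valued pairing $\langle Dh,S_{\alpha,\beta}[w]\rangle$ vanishes, hence so does its scalar part. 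Therefore $\langle w,w\rangle_{0,L^2}=\int_{\Omega}|w|^{2}\,d\sigma=0$ and $w=0$. The injectivity of $S_{\alpha,\beta}$ is what makes $w\mapsto S_{\alpha,\beta}[w]$ a faithful transport of $A_{\alpha,\beta}^{2}$ into $A^{2}$, so that the intersection is honestly reduced to the trivial monogenic one.

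The step I expect to be the main obstacle is making the adjoint identity rigorous. The adjoint is taken with respect to the real scalar product \eqref{eq:scalar-product}, whereas the orthogonality I ultimately invoke lives in the Clifford-valued inner product \eqref{eq:inner-product}; I must keep straight that scalar-part orthogonality is the weaker notion that the argument actually needs. I also have to justify $T_{\Omega}^{*}D=I$ (equivalently $D^{*}=D$) on $W_0^{1,2}$ with full control of domains and of the vanishing boundary integral, and to carry out the Clifford manipulations $(\alpha C)^{*}=M^{\alpha}C$ and $(\beta I)^{*}=\overline{\beta}I$ without sign or ordering errors. Once these are in place the remainder is a one-line orthogonality collapse, and the argument works for Clifford-valued $h$, which is exactly where it improves on the maximum-principle proof of Proposition \ref{prop:empty} that was restricted to scalar $h$ and real $\alpha,\beta$.
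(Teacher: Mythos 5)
Your proof is correct, and its skeleton coincides with the paper's: both rest on the identity $(D-M^{\alpha}C-\overline{\beta})h=S_{\alpha,\beta}^{*}[Dh]$ for $h\in W_0^{1,2}(\Omega,C\ell_{0,n})$ (which the paper states as $S_{\alpha,\beta}^{*}DW_0^{1,2}=(D-M^{\alpha}C-\overline{\beta})W_0^{1,2}$, obtained from $T_{\Omega}D=I$ on $W_0^{1,2}$ and exactly the pointwise adjoints you compute), on Proposition \ref{lemma:transformationS}, and on the monogenic Hodge decomposition of Lemma \ref{lemma:Hodge-monogenic}. Where you genuinely diverge is the concluding step. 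The paper writes $w=S_{\alpha,\beta}^{*}g$ with $g\in A^{2}(\Omega,C\ell_{0,n})^{\bot}$, invokes the $S_{\alpha,\beta}S_{\alpha,\beta}^{*}$-invariance of $A^{2}(\Omega,C\ell_{0,n})^{\bot}$ (via a lemma of Kubrusly) to place $S_{\alpha,\beta}[w]$ in $A^{2}(\Omega,C\ell_{0,n})\cap A^{2}(\Omega,C\ell_{0,n})^{\bot}=\{0\}$, and only then uses the injectivity hypothesis to pass from $S_{\alpha,\beta}[w]=0$ to $w=0$. You instead compute
\[
\langle w,w\rangle_{0,L^2}=\langle S_{\alpha,\beta}^{*}[Dh],w\rangle_{0,L^2}=\langle Dh,S_{\alpha,\beta}[w]\rangle_{0,L^2}=0,
\]
the last equality because $Dh\in DW_0^{1,2}=A^{2}(\Omega,C\ell_{0,n})^{\bot}$ while $S_{\alpha,\beta}[w]\in A^{2}(\Omega,C\ell_{0,n})$, and orthogonality in \eqref{eq:inner-product} implies scalar-part orthogonality; since $\text{Sc }(\overline{w}w)=\sum_A w_A^2$, this forces $w=0$. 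This ending is more elementary (no invariance lemma) and, notably, it never uses the injectivity of $S_{\alpha,\beta}$, so your argument actually establishes the conclusion for arbitrary $\alpha,\beta\in L^{\infty}(\Omega,C\ell_{0,n})$ --- a slightly stronger result than stated. For that reason your closing sentence, attributing the collapse of the intersection to injectivity, misdescribes your own proof: either delete it or replace it with the remark that the injectivity hypothesis is superfluous for this route.
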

\begin{proof}
First, using that $T_{\Omega}D=I$ on $W_0^{1,2}(\Omega,\mathcal{C}l_{0,n})$ it is easy to verify that 
\[
S_{\alpha,\beta}^*D W_0^{1,2}(\Omega,\mathcal{C}l_{0,n})=(I-T_{\Omega}[\alpha C+\beta I])^*DW_0^{1,2}(\Omega,\mathcal{C}l_{0,n})=(D-M^{\alpha}C-\overline{\beta}) W_0^{1,2}(\Omega,C\ell_{0,n}),
\]
under the scalar product \eqref{eq:scalar-product}. 

Let $w\in A_{\alpha,\beta}^2(\Omega,C\ell_{0,n})\cap (D-M^{\alpha}C-\overline{\beta}) W_0^{1,2}(\Omega,C\ell_{0,n})= A_{\alpha,\beta}^2(\Omega,\mathcal{C}l_{0,n})\cap S_{\alpha,\beta}^*D W_0^{1,2}(\Omega,\mathcal{C}l_{0,n})$. By Lemma \ref{lemma:Hodge-monogenic}, then $w=S_{\alpha,\beta}^*g$, with $g\in A^2(\Omega,\mathcal{C}l_{0,n})^{\bot}$. Since $A^2(\Omega,\mathcal{C}l_{0,n})$ is $S_{\alpha,\beta}S_{\alpha,\beta}^*$ invariant, we have that $A^2(\Omega,\mathcal{C}l_{0,n})^{\bot}$ is $S_{\alpha,\beta}S_{\alpha,\beta}^*$ invariant \cite[Lem.\ 1.6]{Kub2012}. Therefore, again by the Hodge decomposition provided by Lemma \ref{lemma:Hodge-monogenic} $S_{\alpha,\beta}w=S_{\alpha,\beta}S_{\alpha,\beta}^*g\in A^2(\Omega,\mathcal{C}l_{0,n})\cap A^2(\Omega,\mathcal{C}l_{0,n})^{\bot}=\{0\}$. Due to $S_{\alpha,\beta}$ is injective, we conclude that $w=0$. 
\end{proof}

%Let $w\in A_{\alpha,\beta}^2(\Omega,C\ell_{0,n})\cap S_{\alpha,\beta}^*D W_0^{1,2}(\Omega,C\ell_{0,n})$. Then $w=S_{\alpha,\beta}^*g$, with $g\in A^2(\Omega,C\ell_{0,n})^{\bot}$. Since $A^2(\Omega,C\ell_{0,n})$ is $S_{\alpha,\beta}S_{\alpha,\beta}^*$ invariant, we have that $A^2(\Omega,C\ell_{0,n})^{\bot}$ is $S_{\alpha,\beta}S_{\alpha,\beta}^*$ invariant \cite[Lem.\ 1.6]{Kub2012}. Therefore, by the Hodge decomposition of the Clifford monogenic functions provided by Lemma \ref{lemma:Hodge-monogenic} $S_{\alpha,\beta}w=S_{\alpha,\beta}S_{\alpha,\beta}^*g\in A^2(\Omega,C\ell_{0,n})\cap A^2(\Omega,C\ell_{0,n})^{\bot}=\{0\}$. Due to $S_{\alpha,\beta}$ is injective, we conclude that $w=0$. 
%\end{proof}

\begin{theorem}\label{th:Hodge-decomposition}(\textbf{Hodge decomposition theorem})
%\comment{We need  the hypothesis of boundedness to have well defined the Neumann series}
Let $\Omega$ be a bounded Lipschitz domain, and let $\alpha, \beta\in L^{\infty}(\Omega,C\ell_{0,n})$.
%$\|\alpha\|_{\infty}+\|\beta\|_{\infty}\leq 1/\|T_{\Omega}\|$. 
Then the Hilbert space $L^2(\Omega,C\ell_{0,n})$ allows the orthogonal decomposition
\begin{align}\label{eq:Hodge-2}  L^2(\Omega,C\ell_{0,n})=A_{\alpha,\beta}^2(\Omega,C\ell_{0,n})\oplus (D-M^{\alpha}C-\overline{\beta}) W_0^{1,2}(\Omega,C\ell_{0,n}),
\end{align} 
under the scalar product \eqref{eq:scalar-product}.
\end{theorem}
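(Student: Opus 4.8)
The plan is to reduce the decomposition to the monogenic Hodge decomposition of Lemma \ref{lemma:Hodge-monogenic} by transporting it through the operator $S_{\alpha,\beta}$ and its adjoint, exploiting the identity established inside the proof of Proposition \ref{prop:empty-2},
\[
S_{\alpha,\beta}^{*}\,Dh=(D-M^{\alpha}C-\overline{\beta})h,\qquad h\in W_0^{1,2}(\Omega,C\ell_{0,n}),
\]
so that the second summand in \eqref{eq:Hodge-2} is exactly $S_{\alpha,\beta}^{*}\big[D\,W_0^{1,2}(\Omega,C\ell_{0,n})\big]$. Since $A_{\alpha,\beta}^2(\Omega,C\ell_{0,n})$ is closed (Proposition \ref{prop:Bergman-generalized-closed}) and the two summands meet only at $0$ (Proposition \ref{prop:empty-2}), it suffices to prove two things: that the two summands are orthogonal under \eqref{eq:scalar-product}, and that together they span $L^2(\Omega,C\ell_{0,n})$; the orthogonal sum of a closed subspace with its orthogonal complement is then the whole Hilbert space.

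For the orthogonality of the two summands I would take $w\in A_{\alpha,\beta}^2(\Omega,C\ell_{0,n})$ and $v=(D-M^{\alpha}C-\overline{\beta})h=S_{\alpha,\beta}^{*}Dh$. Moving the adjoint across the scalar product gives $\langle w,v\rangle_{0,L^2}=\langle S_{\alpha,\beta}w,\,Dh\rangle_{0,L^2}$. By Proposition \ref{lemma:transformationS}, $S_{\alpha,\beta}w\in A^2(\Omega,C\ell_{0,n})$ is monogenic, while $Dh\in D\,W_0^{1,2}(\Omega,C\ell_{0,n})$; the monogenic Hodge decomposition of Lemma \ref{lemma:Hodge-monogenic} is orthogonal under the full inner product \eqref{eq:inner-product}, so $\langle S_{\alpha,\beta}w,Dh\rangle_{L^2(\Omega)}=0$, and taking scalar parts yields $\langle w,v\rangle_{0,L^2}=0$. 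This direction uses only that $T_{\Omega}D=I$ on $W_0^{1,2}$ and needs no invertibility.

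For completeness I would characterize the orthogonal complement. Assuming $S_{\alpha,\beta}$ is invertible (Remark \ref{rmk:Neumann-serie}), hence so is $S_{\alpha,\beta}^{*}$, I recall from Propositions \ref{lemma:transformationS} and \ref{prop:inverse} that $S_{\alpha,\beta}$ maps $A_{\alpha,\beta}^2(\Omega,C\ell_{0,n})$ bijectively onto $A^2(\Omega,C\ell_{0,n})$. Let $u$ be scalar-orthogonal to $A_{\alpha,\beta}^2(\Omega,C\ell_{0,n})$ and write $u=S_{\alpha,\beta}^{*}\tilde u$ with $\tilde u=(S_{\alpha,\beta}^{*})^{-1}u$. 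Then for every $w$ in the Vekua space $\langle S_{\alpha,\beta}w,\tilde u\rangle_{0,L^2}=\langle w,u\rangle_{0,L^2}=0$, and as $w$ varies $S_{\alpha,\beta}w$ exhausts $A^2(\Omega,C\ell_{0,n})$, so $\tilde u$ is scalar-orthogonal to every monogenic function. Because $A^2(\Omega,C\ell_{0,n})$ is a right $C\ell_{0,n}$-module and $\text{Sc}(\overline{a}c)=a\cdot c$, applying this to $ma$ for all $a\in C\ell_{0,n}$ upgrades scalar-orthogonality into $\langle m,\tilde u\rangle_{L^2(\Omega)}=0$ for every monogenic $m$. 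Thus $\tilde u$ lies in the full orthogonal complement of $A^2(\Omega,C\ell_{0,n})$, which by Lemma \ref{lemma:Hodge-monogenic} is $D\,W_0^{1,2}(\Omega,C\ell_{0,n})$; writing $\tilde u=Dh$ gives $u=S_{\alpha,\beta}^{*}Dh=(D-M^{\alpha}C-\overline{\beta})h$. Together with the orthogonality above, this identifies the second summand with $A_{\alpha,\beta}^2(\Omega,C\ell_{0,n})^{\bot}$, and \eqref{eq:Hodge-2} follows.

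The main obstacle will be the completeness half, and within it the invertibility of $S_{\alpha,\beta}$: both the surjectivity of $S_{\alpha,\beta}$ onto $A^2(\Omega,C\ell_{0,n})$ and the representation $u=S_{\alpha,\beta}^{*}\tilde u$ rest on it, so the argument as written requires the smallness condition $\|\alpha\|_{L^{\infty}}+\|\beta\|_{L^{\infty}}<1/\|T_{\Omega}\|$ of Remark \ref{rmk:Neumann-serie} (or, more generally, injectivity of $S_{\alpha,\beta}$ combined with a Fredholm argument, since $T_{\Omega}[\alpha C+\beta I]$ is compact on $L^2$). The second delicate point is conceptual rather than computational: the decomposition is orthogonal only for the real scalar product \eqref{eq:scalar-product}, so one must upgrade scalar-orthogonality to full $C\ell_{0,n}$-orthogonality through the module structure before Lemma \ref{lemma:Hodge-monogenic} can be invoked.
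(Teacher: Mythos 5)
Your orthogonality half is sound and needs no extra hypotheses: the identity $S_{\alpha,\beta}^{*}Dh=(D-M^{\alpha}C-\overline{\beta})h$ on $W_0^{1,2}(\Omega,C\ell_{0,n})$, together with Proposition \ref{lemma:transformationS} and Lemma \ref{lemma:Hodge-monogenic}, does give $\langle w,(D-M^{\alpha}C-\overline{\beta})h\rangle_{0,L^2}=0$ (and once the two summands are orthogonal, their intersection is automatically $\{0\}$, since the quadratic form of \eqref{eq:scalar-product} is the $L^2$ norm --- you do not need Proposition \ref{prop:empty-2} for that). The genuine gap is in the completeness half: your argument writes an arbitrary $u\in A_{\alpha,\beta}^2(\Omega,C\ell_{0,n})^{\bot}$ as $u=S_{\alpha,\beta}^{*}\tilde u$ and uses that $S_{\alpha,\beta}$ maps the Vekua space \emph{onto} $A^2(\Omega,C\ell_{0,n})$, both of which require $S_{\alpha,\beta}$ (hence $S_{\alpha,\beta}^{*}$) to be invertible. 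That holds only under the smallness condition $\|\alpha\|_{L^{\infty}}+\|\beta\|_{L^{\infty}}<1/\|T_{\Omega}\|$ of Remark \ref{rmk:Neumann-serie}, which is \emph{not} a hypothesis of Theorem \ref{th:Hodge-decomposition}: the theorem is asserted, and proved in the paper, for arbitrary $\alpha,\beta\in L^{\infty}(\Omega,C\ell_{0,n})$. Your parenthetical fallback (compactness of $T_{\Omega}[\alpha C+\beta I]$ plus a Fredholm argument) is not developed and would still presuppose injectivity of $S_{\alpha,\beta}$, again an added hypothesis. So as written you prove a strictly weaker statement.

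The idea you are missing is that the completeness half can be obtained from the very same computation that gives orthogonality, read in the opposite direction, with no mapping properties of $S_{\alpha,\beta}$ at all. The paper integrates by parts (Gauss's theorem plus the vanishing trace of $h$) to get
\begin{equation*}
\langle w,(D-M^{\alpha}C-\overline{\beta})h\rangle_{0,L^2}
=\text{Sc}\int_{\Omega}\overline{(D-\alpha C-\beta)w}\,h\,d\sigma ,
\end{equation*}
valid for any $w\in L^2$ with the left factor interpreted distributionally. Taking $w$ orthogonal to $(D-M^{\alpha}C-\overline{\beta})C_{c}^{\infty}(\Omega,C\ell_{0,n})$ and letting $h$ range over test functions $g$, the right-hand side vanishing for all $g$ says precisely that $w$ is a weak solution of the Vekua equation, i.e.\ $w\in A_{\alpha,\beta}^2(\Omega,C\ell_{0,n})$; a density argument ($C_{c}^{\infty}$ dense in $W_0^{1,2}$) then identifies $A_{\alpha,\beta}^2(\Omega,C\ell_{0,n})^{\bot}$ with $(D-M^{\alpha}C-\overline{\beta})W_0^{1,2}(\Omega,C\ell_{0,n})$. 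This duality argument is what lets the paper dispense with invertibility; your transport-by-$S_{\alpha,\beta}^{*}$ strategy cannot, because it needs to realize every element of the complement in the image of $S_{\alpha,\beta}^{*}$.
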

\begin{proof}
It remains to prove that $(D-M^{\alpha}C-\overline{\beta}) W_0^{1,2}(\Omega,C\ell_{0,n})=A_{\alpha,\beta}^2(\Omega,C\ell_{0,n})^{\bot}$.
Let $h\in W_0^{1,2}(\Omega,C\ell_{0,n})$ and $w\in A_{\alpha,\beta}^2(\Omega,C\ell_{0,n})$. By the Gauss's theorem and from the fact that $h$ has vanishing boundary values, then
\begin{align}\label{eq:computations-orthogonal}
  \nonumber \langle w, (D-M^{\alpha}C-\overline{\beta})h \rangle_{0,L^2}&=\text{Sc }\int_{\Omega} \overline{w} \, (Dh-\overline{h}\alpha-\overline{\beta}h)\, d\sigma\\
  \nonumber &=\text{Sc }\int_{\Omega} -((\overline{w}D)h+\overline{w}\overline{h}\alpha+\overline{w}\overline{\beta}h)\, d\sigma+\text{Sc }\int_{\partial\Omega} \overline{w} \,\eta\,  h\, ds\\
    \nonumber &=\text{Sc }\int_{\Omega} -((\overline{w}D)h+w\overline{\alpha}h+\overline{w}\overline{\beta}h)\, d\sigma\\
    &=\text{Sc }\int_{\Omega} \overline{(D-\alpha C-\beta)w} \, h\, d\sigma,
\end{align}
where the third equality is justified by $\text{Sc }(\overline{w}\overline{h}\alpha)=\text{Sc }(w\overline{\alpha}h)$. Thus, 
\begin{equation*}%\label{eq:contention-1}
(D-M^{\alpha}C-\overline{\beta}) W_0^{1,2}(\Omega,C\ell_{0,n})\subset A_{\alpha,\beta}^2(\Omega,C\ell_{0,n})^{\bot}.
\end{equation*}
Now, we will prove that 
\begin{equation}\label{eq:contention-2}
(D-M^{\alpha}C-\overline{\beta})C_{c}^{\infty}(\Omega,C\ell_{0,n})^{\bot}\subset A_{\alpha,\beta}^2(\Omega,C\ell_{0,n}).
\end{equation}

Let $w \in (D-M^{\alpha}C-\overline{\beta})C_{c}^{\infty}(\Omega,C\ell_{0,n})^{\bot}$, then
\[
0=\nonumber \langle w, (D-M^{\alpha}C-\overline{\beta})g \rangle_{0,L^2},
\]
for all $g\in C_{c}^{\infty}(\Omega,C\ell_{0,n})$. Using again \eqref{eq:computations-orthogonal}, we obtain
\begin{align*}%\label{eq:converse-1}
0=\text{Sc }\int_{\Omega} \overline{(D-\alpha C-\beta)w} \, g\, d\sigma,
\end{align*}
for all $g\in C_{c}^{\infty}(\Omega,C\ell_{0,n})$. That is, $w$ is a weak solution of the Vekua equation \eqref{eq:Vekua_equation}, which implies \eqref{eq:contention-2}. Applying the fact that $\overline{C_{c}^{\infty}(\Omega,C\ell_{0,n})}=W^{1,2}(\Omega,C\ell_{0,n})$ in \eqref{eq:contention-2} we readily obtain that $A_{\alpha,\beta}^2(\Omega,C\ell_{0,n})^{\bot}\subset (D-M^{\alpha}C-\overline{\beta}) W_0^{1,2}(\Omega,C\ell_{0,n})$, which completes the proof.
\end{proof}

Notice that the following result generalizes the Hodge decompositions obtained for $\beta$ be a paravector in $C\ell_{0,n}$ in \cite{Sprossig1995, Sprossig2001} and for $\beta=D p/p$ with $p\in C^{\infty}(\Omega,\R)$ be a positive function in \cite{Sprossig1993}.  
\begin{corollary}
In the case $\alpha\equiv 0$. The Hodge decomposition \eqref{eq:Hodge-2} of Theorem \ref{th:Hodge-decomposition} reduces to
\begin{align*}
  L^2(\Omega,C\ell_{0,n})=A_{0,\beta}^2(\Omega,C\ell_{0,n})\oplus (D-\overline{\beta}) W_0^{1,2}(\Omega,C\ell_{0,n}), 
\end{align*}
with respect to both inner products \eqref{eq:inner-product} and \eqref{eq:scalar-product}, respectively.
\end{corollary}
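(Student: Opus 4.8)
The plan is to observe that of the two claimed orthogonalities, only one requires a new argument. Substituting $\alpha\equiv 0$ into Theorem \ref{th:Hodge-decomposition} collapses the operator $D-M^{\alpha}C-\overline{\beta}$ to $D-\overline{\beta}$ and the Vekua equation \eqref{eq:Vekua_equation} to $Dw=\beta w$; hence the stated decomposition, orthogonal with respect to the scalar product \eqref{eq:scalar-product}, is nothing but the special case $\alpha=0$ of Theorem \ref{th:Hodge-decomposition}, and there is nothing further to check for \eqref{eq:scalar-product}. The real content of the corollary is that, once $\alpha\equiv 0$, the two summands become orthogonal with respect to the full $C\ell_{0,n}$-valued inner product \eqref{eq:inner-product}, not merely its scalar part.

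To prove this, I would repeat the computation \eqref{eq:computations-orthogonal} verbatim but without applying $\text{Sc}$ at the end. Fixing $w\in A_{0,\beta}^2(\Omega,C\ell_{0,n})$ and $h\in W_0^{1,2}(\Omega,C\ell_{0,n})$, I expand
\[
\langle w,(D-\overline{\beta})h\rangle_{L^2(\Omega)}=\int_{\Omega}\overline{w}\,(Dh-\overline{\beta}h)\,d\sigma .
\]
The Clifford Stokes formula $\int_{\Omega}[(\overline{w}D)h+\overline{w}(Dh)]\,d\sigma=\int_{\partial\Omega}\overline{w}\,\eta\,h\,ds$ kills the boundary term because $h$ has vanishing trace, and the identities $\overline{w}D=-\overline{Dw}$ and $\overline{w}\,\overline{\beta}=\overline{\beta w}$ give
\[
\langle w,(D-\overline{\beta})h\rangle_{L^2(\Omega)}=\int_{\Omega}\overline{(D-\beta)w}\,h\,d\sigma=\langle (D-\beta)w,h\rangle_{L^2(\Omega)} .
\]
Since $w$ solves $Dw=\beta w$ the right side is $0$, so $(D-\overline{\beta})W_0^{1,2}(\Omega,C\ell_{0,n})\perp A_{0,\beta}^2(\Omega,C\ell_{0,n})$ already in the norm \eqref{eq:inner-product}.

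To finish I would upgrade the set-theoretic splitting furnished by Theorem \ref{th:Hodge-decomposition} to a genuine orthogonal decomposition for \eqref{eq:inner-product}. Because $A_{0,\beta}^2(\Omega,C\ell_{0,n})+(D-\overline{\beta})W_0^{1,2}(\Omega,C\ell_{0,n})=L^2(\Omega,C\ell_{0,n})$ and the two subspaces are now $\langle\cdot,\cdot\rangle_{L^2(\Omega)}$-orthogonal, the positivity $\langle v,v\rangle_{L^2(\Omega)}=\int_{\Omega}|v|^2\,d\sigma\ge 0$ forces their intersection to be $\{0\}$ and identifies $(D-\overline{\beta})W_0^{1,2}(\Omega,C\ell_{0,n})$ with the full orthogonal complement of the closed subspace $A_{0,\beta}^2(\Omega,C\ell_{0,n})$; in particular the complement is closed, and the decomposition is orthogonal for both inner products as claimed.

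I expect the only delicate step to be the integration by parts in the second display, and this is precisely where the restriction $\alpha\equiv 0$ matters. In the general proof of Theorem \ref{th:Hodge-decomposition} the term $M^{\alpha}C[h]=\overline{h}\alpha$ creates a summand $\overline{w}\,\overline{h}\,\alpha$ that can be recast as $w\overline{\alpha}h$ only after taking scalar parts, via $\text{Sc}(\overline{w}\,\overline{h}\,\alpha)=\text{Sc}(w\overline{\alpha}h)$ --- which is exactly why the theorem is stated only for \eqref{eq:scalar-product}. With $\alpha\equiv 0$ this obstructing term disappears, so every manipulation stays $C\ell_{0,n}$-valued and no scalar projection is needed; the only genuine care is in tracking the signs of the Clifford Stokes formula and of $\overline{w}D=-\overline{Dw}$.
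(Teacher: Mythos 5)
Your proposal is correct and takes essentially the same route as the paper: the paper's entire proof consists of redoing the computation \eqref{eq:computations-orthogonal} with the full inner product \eqref{eq:inner-product} in place of \eqref{eq:scalar-product}, which is legitimate precisely because the term $\overline{w}\,\overline{h}\,\alpha$ --- the only step needing the scalar-part identity $\text{Sc}(\overline{w}\,\overline{h}\alpha)=\text{Sc}(w\overline{\alpha}h)$ --- disappears when $\alpha\equiv 0$, exactly as you observe. One cosmetic caveat in your final paragraph: for general $C\ell_{0,n}$-valued $v$ the quantity $\langle v,v\rangle_{L^2(\Omega)}=\int_{\Omega}\overline{v}v\,d\sigma$ need not equal $\int_{\Omega}|v|^2\,d\sigma$ (only its scalar part is $\|v\|_{L^2}^2$), but this does not affect your conclusion, since the directness of the sum is already supplied by Theorem \ref{th:Hodge-decomposition}.
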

\begin{proof}
The proof is immediate just by changing the inner product \eqref{eq:scalar-product} by \eqref{eq:inner-product} in the computations made in \eqref{eq:computations-orthogonal}.
\end{proof}
\begin{remark}
By \eqref{eq:computations-orthogonal}, we can observe that 
\[
\langle w, (D-M^{\alpha}C-\overline{\beta})h \rangle_{0,L^2}=\langle (D-\alpha C-\beta)w, h \rangle_{0,L^2},\quad \forall h\in W_0^{1,2}(\Omega,C\ell_{0,n}).
\]
Thus, we can consider the operator $D-M^{\alpha}C-\overline{\beta}$ as the adjoint operator of the Vekua operator $D-\alpha C-\beta$, when we are restricted to the vanishing trace Sobolev space $W_0^{1,2}(\Omega,C\ell_{0,n})$. According to the interrelationship that exists between these two operators. We can also consider the subspace of $L^2$ solutions of 
\begin{equation}\label{eq:Vekua-adjoint}
    Dw=\overline{w}\alpha+\overline{\beta}w, \quad \text{ in }\Omega.
\end{equation}
Analogously to the previous analysis made in Theorem \ref{th:Hodge-decomposition}, we will have the next orthogonal decomposition
 \begin{align}\label{eq:Hodge-adjoint}  L^2(\Omega,C\ell_{0,n})=\text{Ker }(D-M^{\alpha}C
 -\overline{\beta})\cap L^2(\Omega,C\ell_{0,n})\oplus (D-\alpha C-\beta) W_0^{1,2}(\Omega,C\ell_{0,n}),
\end{align} 
under the scalar product \eqref{eq:scalar-product}. If we consider the multiplication of the operators in the opposite order, then the associated Schrödinger equation is
   \begin{equation}\label{eq:factorization-adjoint}
    \left(-\Delta+|\alpha|^2+|\beta|^2+\div \vec \alpha+\div \vec \beta+2\alpha\cdot \beta\right)h_0=\text{Sc } (D-M^{\alpha}C-\overline{\beta})(D-\alpha C-\beta)h_0,
\end{equation}
which differs from the factorization obtained in \eqref{eq:factorization}. In particular, if $\alpha=\nabla f/f$ and $\beta=\nabla g/g$, then \eqref{eq:factorization-adjoint} becomes
\[
   \left(-\Delta+\dfrac{\Delta f}{f}+\dfrac{\Delta g}{g}+2\dfrac{\nabla g}{g}\cdot \dfrac{\nabla f}{f}\right)h_0=\text{Sc } (D-M^{\alpha}C-\overline{\beta})(D-\alpha C-\beta)h_0.
\]
\end{remark}

%\subsection{The dual space of $A_{\alpha,\beta}^2(\Omega,C\ell_{0,n})$}\label{subsec:dual space}
\section{The Vekua projection}\label{sec:Bergman-projection}
Recall that $A_{\alpha,\beta}^2(\Omega)\subseteq L^2(\Omega)$ is a Hilbert subspace over $\mathbb{R}$ with respect to the scalar product \eqref{eq:scalar-product}.
Consequently, there exist ortho-projections
%\comment{It is just a subspace over $\R$, is it appropriate to call a Hilbert space? or left $\R$-Hilbert module?}
\begin{align}\label{eq:ortho-projections}
 \nonumber  P_{\alpha,\beta} &\colon L^2(\Omega,C\ell_{0,n}) \to A_{\alpha,\beta}^2(\Omega,C\ell_{0,n}),\\
   Q_{\alpha,\beta} &\colon L^2(\Omega,C\ell_{0,n}) \to A_{\alpha,\beta}^2(\Omega,C\ell_{0,n})^{\bot},
\end{align}
such that $P_{\alpha,\beta}+Q_{\alpha,\beta}=I$ in $L^2(\Omega,C\ell_{0,n})$. We call $P_{\alpha,\beta}$ the \textit{Vekua projection}. 

The first part of the following Proposition \ref{prop:invariance} is called the \textit{invariance formula} for the generalized Vekua space $A_{\alpha,\beta}^2(\Omega, C\ell_{0,n})$ and it is an immediate consequence of the orthogonality of the projections $P_{\alpha,\beta}$
and $Q_{\alpha,\beta}$:

\begin{proposition}\label{prop:invariance} The following statements hold
\begin{enumerate}
\item[(i)] If $h\in L^2(\Omega,C\ell_{0,n})$ and $w\in A_{\alpha,\beta}^2(\Omega,C\ell_{0,n})$, then 
\begin{align}\label{eq:invariance_formula}
\langle w,h\rangle_{0,L^2}=\langle w, P_{\alpha,\beta}[h]\rangle_{0,L^2}
\end{align}
\item [(ii)] $P_{\alpha,\beta}$ is self-adjoint in $L^2(\Omega,C\ell_{0,n})$.
\end{enumerate}
\end{proposition}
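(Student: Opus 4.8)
The plan is to derive both parts directly from the orthogonal decomposition furnished by Theorem \ref{th:Hodge-decomposition}, together with the symmetry of the scalar product, namely $\langle u,v\rangle_{0,L^2}=\langle v,u\rangle_{0,L^2}$, which was recorded just before Theorem \ref{th:Hodge-decomposition}. No new analytic input is required: everything follows from the abstract structure of the ortho-projections $P_{\alpha,\beta}$ and $Q_{\alpha,\beta}$ satisfying $P_{\alpha,\beta}+Q_{\alpha,\beta}=I$ on $L^2(\Omega,C\ell_{0,n})$, combined with the $\mathbb{R}$-linearity of \eqref{eq:scalar-product} in each argument.

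For part (i), I would begin by writing $h=P_{\alpha,\beta}[h]+Q_{\alpha,\beta}[h]$, using $P_{\alpha,\beta}+Q_{\alpha,\beta}=I$. Expanding $\langle w,h\rangle_{0,L^2}$ by additivity in the second argument gives $\langle w,P_{\alpha,\beta}[h]\rangle_{0,L^2}+\langle w,Q_{\alpha,\beta}[h]\rangle_{0,L^2}$. Since $Q_{\alpha,\beta}[h]\in A_{\alpha,\beta}^2(\Omega,C\ell_{0,n})^{\bot}$ and $w\in A_{\alpha,\beta}^2(\Omega,C\ell_{0,n})$, the very definition of the orthogonal complement forces the second term to vanish, which yields \eqref{eq:invariance_formula} at once.

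For part (ii), I would establish $\langle P_{\alpha,\beta}[u],v\rangle_{0,L^2}=\langle u,P_{\alpha,\beta}[v]\rangle_{0,L^2}$ for all $u,v\in L^2(\Omega,C\ell_{0,n})$ by decomposing both arguments into their $P_{\alpha,\beta}$- and $Q_{\alpha,\beta}$-components. On the left, writing $v=P_{\alpha,\beta}[v]+Q_{\alpha,\beta}[v]$ and using that $P_{\alpha,\beta}[u]\in A_{\alpha,\beta}^2(\Omega,C\ell_{0,n})$ sits in the first slot against $Q_{\alpha,\beta}[v]\in A_{\alpha,\beta}^2(\Omega,C\ell_{0,n})^{\bot}$ in the second slot, the cross term vanishes by the definition of the complement, reducing the expression to $\langle P_{\alpha,\beta}[u],P_{\alpha,\beta}[v]\rangle_{0,L^2}$. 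On the right, writing $u=P_{\alpha,\beta}[u]+Q_{\alpha,\beta}[u]$ produces the cross term $\langle Q_{\alpha,\beta}[u],P_{\alpha,\beta}[v]\rangle_{0,L^2}$; here the element of the complement occupies the first slot, so I would first invoke the symmetry $\langle a,b\rangle_{0,L^2}=\langle b,a\rangle_{0,L^2}$ to rewrite it as $\langle P_{\alpha,\beta}[v],Q_{\alpha,\beta}[u]\rangle_{0,L^2}$, which then vanishes by the definition of $A_{\alpha,\beta}^2(\Omega,C\ell_{0,n})^{\bot}$. What remains is again $\langle P_{\alpha,\beta}[u],P_{\alpha,\beta}[v]\rangle_{0,L^2}$, and comparing the two sides delivers self-adjointness.

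The only genuinely delicate feature—hardly an obstacle—is that the scalar product \eqref{eq:scalar-product} is $\mathbb{R}$-bilinear and symmetric rather than complex-sesquilinear; in this real Hilbert $C\ell_{0,n}$-module setting the automatic self-adjointness of an ortho-projection still holds, but it must be justified through the symmetry relation $\langle a,b\rangle_{0,L^2}=\langle b,a\rangle_{0,L^2}$ and the precise slot in which the complement is defined, rather than by appeal to the usual complex argument. Everything else is a routine application of the decomposition $I=P_{\alpha,\beta}+Q_{\alpha,\beta}$.
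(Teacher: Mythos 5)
Your proposal is correct and follows essentially the same route as the paper: both parts are derived from the decomposition $I=P_{\alpha,\beta}+Q_{\alpha,\beta}$, the definition of $A_{\alpha,\beta}^2(\Omega,C\ell_{0,n})^{\bot}$, and the symmetry of the scalar product \eqref{eq:scalar-product}, with your part (ii) merely spelling out explicitly the slot-by-slot orthogonality that the paper compresses by invoking part (i). Your added care about which argument of the scalar product the complement is defined in is a fair point of rigor, but it does not constitute a different method.
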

\begin{proof}
Let $h\in L^2(\Omega,C\ell_{0,n})$ and $w\in A_{\alpha,\beta}^2(\Omega,C\ell_{0,n})$, then
\begin{align*}
   \langle w,h\rangle_{0,L^2}=\langle w, P_{\alpha,\beta}[h]+Q_{\alpha,\beta}[h]\rangle_{0,L^2}=\langle w, P_{\alpha,\beta}[h]\rangle_{0,L^2}.
\end{align*}
Part $(ii)$ is a straightforward consequence of \eqref{eq:invariance_formula}.
\end{proof}
%Let $w, h\in L^2(\Omega,C\ell_{0,n})$. 
%Due to $h=P_{\alpha,\beta}[h]+Q_{\alpha,\beta}[h]$ and by the invariance formula \eqref{eq:invariance_formula}, we obtain
%\[
%\langle  P_{\alpha,\beta}[w], h\rangle_{0,L^2}=\langle  P_{\alpha,\beta}[w], P_{\alpha,\beta}[h]+Q_{\alpha,\beta}[h]\rangle_{0,L^2}=\langle w, P_{\alpha,\beta}[h]\rangle_{0,L^2}.
%\]
%Therefore, $P_{\alpha,\beta}=P_{\alpha,\beta}^*$ on $L^2(\Omega,C\ell_{0,n})$. 
When $\alpha\equiv 0$ and $\beta\equiv 0$, \eqref{eq:invariance_formula} reduces to the well-known invariance formula for Bergman-type spaces of monogenic functions.
%\comment{Is it justified the name of Bergman spaces in this case?? Check Mitrea!!}
\begin{proposition}\label{prop:dual-space}
Let $\alpha, \beta \in L^{\infty}(\Omega,C\ell_{0,n})$. Then the dual space of $A_{\alpha,\beta}^2(\Omega,C\ell_{0,n})$ is isomorphic to itself.
\end{proposition}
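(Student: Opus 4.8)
The plan is to realize $A_{\alpha,\beta}^2(\Omega,C\ell_{0,n})$ as a real Hilbert space and then invoke the Riesz representation theorem, which furnishes a canonical isometric $\mathbb{R}$-linear identification of a Hilbert space with its dual. First I would record that, by Proposition \ref{prop:Bergman-generalized-closed}, $A_{\alpha,\beta}^2(\Omega,C\ell_{0,n})$ is a closed subspace of the complete space $L^2(\Omega,C\ell_{0,n})$, hence itself complete; together with the scalar product \eqref{eq:scalar-product}, which satisfies $\langle u,u\rangle_{0,L^2}=\|u\|_{L^2}^2\geq 0$ with equality only for $u=0$ and is symmetric, $\langle u,v\rangle_{0,L^2}=\langle v,u\rangle_{0,L^2}$, this makes $(A_{\alpha,\beta}^2(\Omega),\langle\cdot,\cdot\rangle_{0,L^2})$ a genuine real Hilbert space, as already emphasized before Theorem \ref{th:Hodge-decomposition}.

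Second, I would define the Riesz map $\Phi\colon A_{\alpha,\beta}^2(\Omega)\to A_{\alpha,\beta}^2(\Omega)^{*}$ by $\Phi(w)=\langle w,\cdot\rangle_{0,L^2}$. The Cauchy--Schwarz inequality shows each $\Phi(w)$ is a bounded $\mathbb{R}$-linear functional with operator norm $\|\Phi(w)\|=\|w\|_{L^2}$, so $\Phi$ is a well-defined isometry; the symmetry of \eqref{eq:scalar-product} guarantees that $\Phi$ is genuinely $\mathbb{R}$-linear rather than conjugate-linear. Surjectivity is exactly the content of the Riesz representation theorem for real Hilbert spaces: given $\phi\in A_{\alpha,\beta}^2(\Omega)^{*}$, completeness produces a unique representer $w_{\phi}\in A_{\alpha,\beta}^2(\Omega)$ with $\phi=\langle w_{\phi},\cdot\rangle_{0,L^2}$. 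Hence $\Phi$ is an isometric isomorphism, and $A_{\alpha,\beta}^2(\Omega)\cong A_{\alpha,\beta}^2(\Omega)^{*}$ as claimed.

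The point requiring care -- rather than a deep obstacle -- is the algebraic nature of the duality. When $\alpha\neq 0$ the space $A_{\alpha,\beta}^2(\Omega)$ is only an $\mathbb{R}$-subspace of $L^2(\Omega,C\ell_{0,n})$, so ``dual'' must be understood as the space of bounded $\mathbb{R}$-linear functionals and $\Phi$ as an $\mathbb{R}$-linear isomorphism; this is precisely why I would work with the scalar product \eqref{eq:scalar-product} rather than the $C\ell_{0,n}$-valued inner product \eqref{eq:inner-product}. In the special case $\alpha\equiv 0$, where $A_{0,\beta}^2(\Omega)$ is a right Hilbert $C\ell_{0,n}$-module, the same conclusion can instead be read off from the module version of the Riesz representation theorem \cite[Th.\ 7.6]{BDS1982}, yielding a right-module isomorphism onto the module dual; I would note this refinement but base the general statement on the real Hilbert space version above.
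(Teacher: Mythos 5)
Your proof is correct, but it follows a different route from the paper's. The paper argues through the ambient space: given a bounded ($\mathbb{R}$-linear) functional on $A_{\alpha,\beta}^2(\Omega,C\ell_{0,n})$, it extends it to all of $L^2(\Omega,C\ell_{0,n})$ via the Clifford Hahn--Banach theorem \cite{BDS1982}, represents the extension using the self-duality of $L^2(\Omega,C\ell_{0,n})$ \cite[Th.\ 10.1, 10.3]{GuHaSpr2008}, and then pulls the representer back into $A_{\alpha,\beta}^2(\Omega,C\ell_{0,n})$ by means of the invariance formula \eqref{eq:invariance_formula}, i.e.\ the $L^2$-representer $h$ gets replaced by $P_{\alpha,\beta}[h]$; this mirrors the complex-case proof of \cite[Prop.\ 3.8]{DelLeb2020}. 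You instead observe that, by Proposition \ref{prop:Bergman-generalized-closed} and the positive definiteness and symmetry of the scalar product \eqref{eq:scalar-product}, the space $A_{\alpha,\beta}^2(\Omega,C\ell_{0,n})$ is itself a real Hilbert space, and apply the Riesz representation theorem to it directly, bypassing both Hahn--Banach and the projection. Your argument is more self-contained and yields the isometry $\|\Phi(w)\|=\|w\|_{L^2}$ explicitly; it also correctly isolates the one genuine subtlety, namely that for $\alpha\neq 0$ the dual must mean bounded $\mathbb{R}$-linear functionals. What the paper's detour buys is structural information consistent with its later development: the representer of an extended functional is exhibited concretely as $P_{\alpha,\beta}[h]$, tying the duality statement to the Vekua projection and to the reproducing-kernel construction of Section \ref{sec:kernel}, where the same Riesz-type reasoning reappears component-wise. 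Your closing remark on the module-dual refinement for $\alpha\equiv 0$ via \cite[Th.\ 7.6]{BDS1982} is a sound complement; note that the paper's invocation of \cite{BDS1982} plays a different role (extension, not representation).
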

\begin{proof}
The proof is analogous to \cite[Prop.\ 3.8]{DelLeb2020}. Due to the Clifford generalization of the Hahn Banach extension theorem \cite{BDS1982}, the fact that $L^2(\Omega, C\ell_{0,n})^*$ is isomorphic to $L^2(\Omega, C\ell_{0,n})$ (see \cite[Th.\ 10.1, 10.3]{GuHaSpr2008} for the quaternionic version), and the invariance formula \eqref{eq:invariance_formula}.
\end{proof}

%Recall the definition of adjoint operator in Hilbert spaces \cite[Sec.\ 1.5]{Kub2012}. 
Since $S_{\alpha,\beta}$ is a bounded operator from $A_{\alpha,\beta}^2(\Omega,C\ell_{0,n})$ to $A^2(\Omega,C\ell_{0,n})$, we have that $S_{\alpha,\beta}^*$
%\colon A^2(\Omega,C\ell_{0,n})\to A_{\alpha,\beta}^2(\Omega,C\ell_{0,n})$ 
is the unique operator satisfying
\begin{align*}
   \langle S_{\alpha,\beta}[w], h\rangle_{0,L^2}=\langle w, S_{\alpha,\beta}^*[h]\rangle_{0,L^2},
\end{align*}
for every $w\in A_{\alpha,\beta}^2(\Omega,C\ell_{0,n})$ and $h\in A^2(\Omega,C\ell_{0,n})$. Moreover, 
\begin{align}\label{eq:adjoint}
S_{\alpha,\beta}^*\colon A^2(\Omega,C\ell_{0,n})\to A_{\alpha,\beta}^2(\Omega,C\ell_{0,n}), 
\end{align}
is a bounded operator and $\|S_{\alpha,\beta}^*\|=\|S_{\alpha,\beta}\|$, see for instance \cite[Sec.\ 1.5]{Kub2012}.

%\section{The Bergman projection for the Vekua equation}
%On the other hand, by \cite[Lemma\ 1.4]{Kub2012}, $\text{Ker}(T^*)=\text{Im}(T)^{\bot}$, which in turn is valid for anti-linear operators. Therefore
%\begin{align}\label{eq:kernel-image}
%    A_{\alpha,\beta}^2(\Omega,C\ell_{0,n})^{\bot}=\text{Ker}(P_{\alpha,\beta}^*)=\text{Ker}(S_{\alpha,\beta}^* P_{\Omega}^* (S_{\alpha,\beta}^{-1})^*)=\text{Ker}(P_{\Omega}^* (S_{\alpha,\beta}^{-1})^*).
%\end{align} 
%By \eqref{eq:kernel-image} and the fact that $\text{Ker}(P_{\Omega}^*)=A^2(\Omega,C\ell_{0,n})^{\bot}$, we obtain that $(S_{\alpha,\beta}^{-1})^* A_{\alpha,\beta}^2(\Omega,C\ell_{0,n})^{\bot}\subset A^2(\Omega,C\ell_{0,n})^{\bot}$. Since $(S_{\alpha,\beta}^{-1})^*=(S_{\alpha,\beta}^*)^{-1}$, we conclude that $A_{\alpha,\beta}^2(\Omega,C\ell_{0,n})^{\bot}\subset S_{\alpha,\beta}^* A^2(\Omega,C\ell_{0,n})^{\bot}$ as we desired.

Recall the conditions analyzed in Remark \ref{rmk:Neumann-serie} to guarantee that $S_{\alpha,\beta}$ be an invertible operator. By the relation $\text{Ker }(T^*)=\text{Im}(T)^{\bot}$ \cite[Lemma\ 1.4]{Kub2012}, if $S_{\alpha,\beta}$ is invertible in $L^2(\Omega, C\ell_{0,n})$, then $S_{\alpha,\beta}^*$ is also invertible. Furthermore, $(S_{\alpha,\beta}^*)^{-1}=(S_{\alpha,\beta}^{-1})^*$. As a consequence of Proposition \ref{prop:inverse}, we obtain that
\begin{align}\label{eq:adjoint-inverse}
 (S_{\alpha,\beta}^*)^{-1}\colon A_{\alpha,\beta}^2(\Omega,C\ell_{0,n}) \to A^2(\Omega,C\ell_{0,n}).
\end{align}
%\comment{¿Necesitamos sobreyectividad?}
Notice that
\begin{equation}\label{eq:orthogonal-complement}
A_{\alpha,\beta}^2(\Omega,C\ell_{0,n})^{\bot}=S_{\alpha,\beta}^* \, DW_0^{1,2}(\Omega,C\ell_{0,n})=S_{\alpha,\beta}^*( A^2(\Omega,C\ell_{0,n})^{\bot}),
\end{equation}
where the first equality comes from the proof of Proposition \ref{prop:empty-2} and from the characterization of the orthogonal complement provided by Theorem \ref{th:Hodge-decomposition}. Meanwhile, for the second equality, we used Lemma \ref{lemma:Hodge-monogenic}.

Let us define 
\begin{align*}
P_{\alpha,\beta}^{\prime}&:=S_{\alpha,\beta}^*\, P_{\Omega} \,(S_{\alpha,\beta}^*)^{-1}\colon L^2(\Omega,C\ell_{0,n}) \to A_{\alpha,\beta}^2(\Omega,C\ell_{0,n}),\\
Q_{\alpha,\beta}^{\prime}&:=S_{\alpha,\beta}^*\, Q_{\Omega}\, (S_{\alpha,\beta}^*)^{-1} \colon L^2(\Omega,C\ell_{0,n}) \to A_{\alpha,\beta}^2(\Omega,C\ell_{0,n})^{\bot},
\end{align*}
where $P_{\Omega}$ is the usual Bergman projection defined in 
\eqref{eq:Bergman_projection} and $Q_{\Omega}$ its corresponding ortho-projection. Notice that the fact that the images of $P_{\alpha,\beta}^{\prime}$ and $Q_{\alpha,\beta}^{\prime}$ belongs to $A_{\alpha,\beta}^2(\Omega,C\ell_{0,n})$ and $A_{\alpha,\beta}^2(\Omega,C\ell_{0,n})^{\bot}$ is justified by \eqref{eq:adjoint} and \eqref{eq:orthogonal-complement}, respectively.
\begin{theorem}\label{theo:Bergman-projection}
Let $\alpha, \beta\in L^{\infty}(\Omega,C\ell_{0,n})$ be such that $S_{\alpha,\beta}$ is an invertible operator in $L^2(\Omega, C\ell_{0,n})$.  
% and let $\|\alpha\|_{\infty}+\|\beta\|_{\infty}\leq 1/\|T_{\Omega}\|$. 
The Vekua projection $P_{\alpha,\beta}\colon L^2(\Omega,C\ell_{0,n}) \to A_{\alpha,\beta}^2(\Omega,C\ell_{0,n})$ is given by 
\begin{align}\label{eq:projection1}
    P_{\alpha,\beta}=P_{\alpha,\beta}^{\prime}=S_{\alpha,\beta}^*\, P_{\Omega} \,(S_{\alpha,\beta}^*)^{-1}.
\end{align}
Moreover, $P_{\alpha,\beta}$ is a bounded operator and $Q_{\alpha,\beta}=Q_{\alpha,\beta}^{\prime}$.
\end{theorem}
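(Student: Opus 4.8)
The plan is to exploit the characterization of an orthogonal projection onto a closed subspace: by Theorem \ref{th:Hodge-decomposition} we have the direct sum $L^2(\Omega,C\ell_{0,n})=A_{\alpha,\beta}^2(\Omega,C\ell_{0,n})\oplus A_{\alpha,\beta}^2(\Omega,C\ell_{0,n})^{\bot}$, so the ortho-projection $P_{\alpha,\beta}$ is the unique bounded linear operator that restricts to the identity on $A_{\alpha,\beta}^2(\Omega,C\ell_{0,n})$ and to zero on its orthogonal complement. Hence it suffices to show that $P_{\alpha,\beta}^{\prime}=S_{\alpha,\beta}^*\,P_{\Omega}\,(S_{\alpha,\beta}^*)^{-1}$ has exactly these two properties; since $P_{\alpha,\beta}^{\prime}$ and $P_{\alpha,\beta}$ are both linear and agree on each summand, they agree on all of $L^2(\Omega,C\ell_{0,n})$.

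First I would settle boundedness. Because $S_{\alpha,\beta}$ is invertible on $L^2(\Omega,C\ell_{0,n})$, its adjoint $S_{\alpha,\beta}^*$ is invertible with $(S_{\alpha,\beta}^*)^{-1}=(S_{\alpha,\beta}^{-1})^*$ bounded, and $P_{\Omega}$ is a bounded projection by \eqref{eq:Bergman_projection}. Thus $P_{\alpha,\beta}^{\prime}$ is a composition of bounded operators and is itself bounded, while the mapping properties \eqref{eq:adjoint} and \eqref{eq:adjoint-inverse} guarantee that it sends $L^2(\Omega,C\ell_{0,n})$ into $A_{\alpha,\beta}^2(\Omega,C\ell_{0,n})$.

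Next I would check the action on the two summands. For $w\in A_{\alpha,\beta}^2(\Omega,C\ell_{0,n})$, relation \eqref{eq:adjoint-inverse} gives $(S_{\alpha,\beta}^*)^{-1}w\in A^2(\Omega,C\ell_{0,n})$, on which $P_{\Omega}$ acts as the identity; applying $S_{\alpha,\beta}^*$ then returns $w$, so $P_{\alpha,\beta}^{\prime}w=w$. For $u\in A_{\alpha,\beta}^2(\Omega,C\ell_{0,n})^{\bot}$, the identity \eqref{eq:orthogonal-complement} lets me write $u=S_{\alpha,\beta}^*g$ with $g\in A^2(\Omega,C\ell_{0,n})^{\bot}$, whence $(S_{\alpha,\beta}^*)^{-1}u=g$ and $P_{\Omega}g=0$, so that $P_{\alpha,\beta}^{\prime}u=0$. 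Combining the two cases yields $P_{\alpha,\beta}^{\prime}=P_{\alpha,\beta}$, which is exactly \eqref{eq:projection1}.

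Finally, the identity $Q_{\alpha,\beta}=Q_{\alpha,\beta}^{\prime}$ follows immediately from $P_{\Omega}+Q_{\Omega}=I$: conjugating by $S_{\alpha,\beta}^*$ gives $P_{\alpha,\beta}^{\prime}+Q_{\alpha,\beta}^{\prime}=S_{\alpha,\beta}^*(S_{\alpha,\beta}^*)^{-1}=I$, and since $P_{\alpha,\beta}^{\prime}=P_{\alpha,\beta}$ together with $P_{\alpha,\beta}+Q_{\alpha,\beta}=I$ we conclude $Q_{\alpha,\beta}^{\prime}=Q_{\alpha,\beta}$. I do not expect a serious obstacle here: the whole argument rests on the invertibility of $S_{\alpha,\beta}^*$ and, crucially, on the description \eqref{eq:orthogonal-complement} of the orthogonal complement already obtained in the proof of Proposition \ref{prop:empty-2}. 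Once that is in hand the computation is just a conjugation of the monogenic Bergman projection, and the only point demanding genuine care is confirming that each intermediate image lands in the correct monogenic or Vekua space, which the mapping statements \eqref{eq:adjoint} and \eqref{eq:adjoint-inverse} supply.
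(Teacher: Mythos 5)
Your proof is correct, and it rests on the same pillars as the paper's: the conjugated operator $S_{\alpha,\beta}^*\,P_{\Omega}\,(S_{\alpha,\beta}^*)^{-1}$, the mapping properties \eqref{eq:adjoint} and \eqref{eq:adjoint-inverse}, and the description \eqref{eq:orthogonal-complement} of the orthogonal complement coming from Proposition \ref{prop:empty-2}. The difference lies in how you conclude. The paper proves that $P_{\alpha,\beta}^{\prime}$ is itself an ortho-projection (idempotency, plus the orthogonality $\langle P_{\alpha,\beta}^{\prime}[u],Q_{\alpha,\beta}^{\prime}[v]\rangle_{0,L^2}=0$, which requires the $S_{\alpha,\beta}S_{\alpha,\beta}^*$-invariance of $A^2(\Omega,C\ell_{0,n})^{\bot}$), then identifies $\text{Ker}\,P_{\alpha,\beta}^{\prime}$ and $\text{Im}\,P_{\alpha,\beta}^{\prime}$, and finally invokes uniqueness of ortho-projections; you instead use that $P_{\alpha,\beta}$ is the unique linear operator equal to the identity on $A_{\alpha,\beta}^2(\Omega,C\ell_{0,n})$ and to zero on $A_{\alpha,\beta}^2(\Omega,C\ell_{0,n})^{\bot}$, and verify these two restrictions for $P_{\alpha,\beta}^{\prime}$, with Theorem \ref{th:Hodge-decomposition} supplying the direct sum and linearity finishing the argument. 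The two computations you perform are exactly the ones embedded in the paper's kernel/image steps, so nothing new is needed; what your packaging buys is economy, since the idempotency and orthogonality checks (and the invariance argument behind them) become automatic consequences of $P_{\alpha,\beta}^{\prime}=P_{\alpha,\beta}$ rather than prerequisites. Your derivation of $Q_{\alpha,\beta}=Q_{\alpha,\beta}^{\prime}$ from $P_{\Omega}+Q_{\Omega}=I$ is likewise correct and slightly more direct, and your boundedness argument (that $(S_{\alpha,\beta}^*)^{-1}=(S_{\alpha,\beta}^{-1})^*$ is bounded, so $P_{\alpha,\beta}^{\prime}$ is a composition of bounded operators) amounts to the same appeal to the Inverse Mapping Theorem made in the paper.
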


\begin{proof}
It is easy to verify that $(P_{\alpha,\beta}^{\prime})^2=P_{\alpha,\beta}^{\prime}$, precisely because of $P_{\Omega}^2=P_{\Omega}$. 
We will see that $P_{\alpha,\beta}^{\prime}$ and $Q_{\alpha,\beta}^{\prime}$ are ortho-projections. Indeed, let $u,v\in L^2(\Omega,C\ell_{0,n})$
\begin{align*}
   \langle P_{\alpha,\beta}^{\prime}[u],Q_{\alpha,\beta}^{\prime}[v]\rangle_{0,L^2}&=\langle S_{\alpha,\beta}^*P_{\Omega} (S_{\alpha,\beta}^*)^{-1}[u],S_{\alpha,\beta}^*Q_{\Omega} (S_{\alpha,\beta}^*)^{-1}[v]\rangle_{0,L^2}\\
	&=\langle P_{\Omega} (S_{\alpha,\beta}^*)^{-1}[u], S_{\alpha,\beta}S_{\alpha,\beta}^*Q_{\Omega} (S_{\alpha,\beta}^*)^{-1}[v]\rangle_{0,L^2}=0,
\end{align*}
where the last equality comes from the facts that $A^2(\Omega,C\ell_{0,n})^{\bot}$ is $S_{\alpha,\beta}S_{\alpha,\beta}^*$ invariant and $P_{\Omega}$ and $Q_{\Omega}$ are ortho-projections as well. 

Now, we will prove that $\text{Ker } P_{\alpha,\beta}^{\prime}=A_{\alpha,\beta}^2(\Omega,C\ell_{0,n})^{\bot}$.
%S_{\alpha,\beta}^*D W_0^{1,2}(\Omega,C\ell_{0,n})$:
Let $w\in \text{Ker } P_{\alpha,\beta}^{\prime}$, then $P_{\alpha,\beta}^{\prime}[w]=S_{\alpha,\beta}^*P_{\Omega} (S_{\alpha,\beta}^*)^{-1}[w]=0$, using that $S_{\alpha,\beta}^*$ is an injective operator, we have $P_{\Omega} (S_{\alpha,\beta}^*)^{-1}[w]=0$. That is, $(S_{\alpha,\beta}^*)^{-1}[w]\in \text{Ker } P_{\Omega}=D W_0^{1,2}(\Omega,C\ell_{0,n})$ (see Lemma \ref{lemma:Hodge-monogenic}). Therefore, 
\begin{equation}\label{eq:containment}
\text{Ker } P_{\alpha,\beta}^{\prime}\subset S_{\alpha,\beta}^*D W_0^{1,2}(\Omega,C\ell_{0,n})=A_{\alpha,\beta}^2(\Omega,C\ell_{0,n})^{\bot}.
\end{equation}
Reciprocally, let $w\in S_{\alpha,\beta}^*D W_0^{1,2}(\Omega,C\ell_{0,n})$, then $w=S_{\alpha,\beta}^*Dh$, with $h\in  W_0^{1,2}(\Omega,C\ell_{0,n})$. Thus, $P_{\alpha,\beta}^{\prime} [w]=S_{\alpha,\beta}^*P_{\Omega} (S_{\alpha,\beta}^*)^{-1}[w]=S_{\alpha,\beta}^*P_{\Omega} Dh=0$, because of $P_{\Omega}Q_{\Omega}=0$. Therefore, the inverse containment of \eqref{eq:containment} follows.
%So, $S_{\alpha,\beta}^*D W_0^{1,2}(\Omega,C\ell_{0,n})\subset \text{Ker } P_{\alpha,\beta}^{\prime}$ as we desired. 

Now, we will prove that $\text{Im } P_{\alpha,\beta}^{\prime}=A_{\alpha,\beta}^2(\Omega,C\ell_{0,n})$. Let $w\in A_{\alpha,\beta}^2(\Omega,C\ell_{0,n})$. By 
\eqref{eq:adjoint-inverse}, $(S_{\alpha,\beta}^*)^{-1}[w]\in A^2(\Omega,C\ell_{0,n})$. By definition, $P_{\Omega}(S_{\alpha,\beta}^*)^{-1}[w]=(S_{\alpha,\beta}^*)^{-1}[w]$. Therefore, $w=P_{\alpha,\beta}^{\prime}[w]$, which implies $A_{\alpha,\beta}^2(\Omega,C\ell_{0,n})\subset \text{Im } P_{\alpha,\beta}^{\prime}$. The inverse containment is straightforward from \eqref{eq:adjoint}. By the uniqueness of the ortho-projections we obtain that $P_{\alpha,\beta}=P_{\alpha,\beta}^{\prime}$ and $Q_{\alpha,\beta}=Q_{\alpha,\beta}^{\prime}$.
To justify the boundedness, it is enough to prove that $(S_{\alpha}^*)^{-1}$ is bounded, which in turn is a consequence of the Inverse Mapping Theorem \cite[Th.\ 1.1]{Kub2012}.
%Since $P S_{\alpha,\beta}[w]\in A^2(\Omega,C\ell_{0,n})$, we obtain that $P_{\alpha,\beta}[w]= S_{\alpha,\beta}^{-1} P_{\Omega} S_{\alpha,\beta}[w]\in A_{\alpha,\beta}^2(\Omega,C\ell_{0,n})$.
\end{proof}
On the other hand, as consequence of Proposition \ref{lemma:transformationS} and due to $S_{\alpha,\beta}$ is an invertible operator we have
\begin{align}\label{eq:fixed}
\nonumber w\in A_{\alpha,\beta}^2(\Omega,C\ell_{0,n}) &\Leftrightarrow S_{\alpha,\beta}[w]\in A^2(\Omega,C\ell_{0,n}) \\
\nonumber &\Leftrightarrow P_{\Omega} S_{\alpha,\beta}[w]= S_{\alpha,\beta}[w]\\ 
&\Leftrightarrow w=S_{\alpha,\beta}^{-1} P_{\Omega} S_{\alpha,\beta}[w].
\end{align}
We can easily see that the following operators are other equivalent expressions for the ortho-projections $P_{\alpha,\beta}$ and $Q_{\alpha,\beta}$:
\begin{align*}
P_{\alpha,\beta}^{\prime\prime}&:=S_{\alpha,\beta}^{-1} \,P_{\Omega} \,S_{\alpha,\beta}\colon L^2(\Omega,C\ell_{0,n}) \to A_{\alpha,\beta}^2(\Omega,C\ell_{0,n}),\\
Q_{\alpha,\beta}^{\prime\prime}&:=S_{\alpha,\beta}^{-1}\, Q_{\Omega} \,S_{\alpha,\beta}\colon L^2(\Omega,C\ell_{0,n}) \to A_{\alpha,\beta}^2(\Omega,C\ell_{0,n})^{\bot},
\end{align*} 
\begin{corollary}
Let $\alpha, \beta\in L^{\infty}(\Omega,C\ell_{0,n})$ be such that $S_{\alpha,\beta}$ is an invertible operator. Then the Vekua ortho-projections admit the following representations
\begin{align}\label{eq:projection2}
P_{\alpha,\beta}=P_{\alpha,\beta}^{\prime}=P_{\alpha,\beta}^{\prime\prime}=S_{\alpha,\beta}^{-1}\, P_{\Omega} \,S_{\alpha,\beta},\qquad Q_{\alpha,\beta}=Q_{\alpha,\beta}^{\prime}=Q_{\alpha,\beta}^{\prime\prime}=S_{\alpha,\beta}^{-1} \,Q_{\Omega}\, S_{\alpha,\beta}
\end{align}
\end{corollary}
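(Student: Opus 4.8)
The plan is to leverage the identity already established in Theorem \ref{theo:Bergman-projection}, namely $P_{\alpha,\beta}=P_{\alpha,\beta}^{\prime}=S_{\alpha,\beta}^*\,P_{\Omega}\,(S_{\alpha,\beta}^*)^{-1}$, together with the self-adjointness of $P_{\alpha,\beta}$ recorded in Proposition \ref{prop:invariance}(ii), and to reduce the whole corollary to a single adjoint computation. The only genuinely new content is the equality $P_{\alpha,\beta}^{\prime\prime}=P_{\alpha,\beta}$; once this is in hand, the $Q$-statement follows instantly from $P_{\alpha,\beta}+Q_{\alpha,\beta}=I$ and $P_\Omega+Q_\Omega=I$, since $Q_{\alpha,\beta}^{\prime\prime}=S_{\alpha,\beta}^{-1}Q_\Omega S_{\alpha,\beta}=S_{\alpha,\beta}^{-1}(I-P_\Omega)S_{\alpha,\beta}=I-P_{\alpha,\beta}^{\prime\prime}$.

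First I would verify that $P_{\alpha,\beta}^{\prime\prime}=S_{\alpha,\beta}^{-1}P_\Omega S_{\alpha,\beta}$ is a projection carrying $L^2(\Omega,C\ell_{0,n})$ onto $A_{\alpha,\beta}^2(\Omega,C\ell_{0,n})$. Idempotency is immediate because $P_\Omega^2=P_\Omega$ collapses the middle factor $S_{\alpha,\beta}S_{\alpha,\beta}^{-1}=I$. That the image equals $A_{\alpha,\beta}^2(\Omega,C\ell_{0,n})$ follows from the chain of equivalences \eqref{eq:fixed}: $P_\Omega S_{\alpha,\beta}[w]$ is monogenic, so by Proposition \ref{prop:inverse} its image under $S_{\alpha,\beta}^{-1}$ solves the Vekua equation, while \eqref{eq:fixed} shows that the fixed points of $P_{\alpha,\beta}^{\prime\prime}$ are exactly the elements of $A_{\alpha,\beta}^2(\Omega,C\ell_{0,n})$.

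The crucial step is to identify $P_{\alpha,\beta}^{\prime\prime}$ with the \emph{orthogonal} projection rather than merely an oblique one, and I would do this through self-adjointness with respect to the scalar product \eqref{eq:scalar-product}. Using that $P_\Omega$ is self-adjoint for \eqref{eq:scalar-product} (it is self-adjoint for \eqref{eq:inner-product}, and the Hodge splitting \eqref{eq:Hodge-1} remains orthogonal after taking the scalar part), together with the reversal rule for adjoints and the relation $(S_{\alpha,\beta}^{-1})^*=(S_{\alpha,\beta}^*)^{-1}$ recorded just before \eqref{eq:adjoint-inverse}, I obtain
\[
(P_{\alpha,\beta}^{\prime\prime})^*=\bigl(S_{\alpha,\beta}^{-1}P_\Omega S_{\alpha,\beta}\bigr)^*=S_{\alpha,\beta}^*\,P_\Omega\,(S_{\alpha,\beta}^*)^{-1}=P_{\alpha,\beta}^{\prime}=P_{\alpha,\beta}.
\]
Since $P_{\alpha,\beta}$ is self-adjoint by Proposition \ref{prop:invariance}(ii), taking adjoints once more yields $P_{\alpha,\beta}^{\prime\prime}=(P_{\alpha,\beta})^*=P_{\alpha,\beta}$, which is the desired equality \eqref{eq:projection2}. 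Equivalently, $P_{\alpha,\beta}^{\prime\prime}$ is a self-adjoint idempotent with range $A_{\alpha,\beta}^2(\Omega,C\ell_{0,n})$, hence coincides with the unique ortho-projection onto that subspace.

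The step I expect to require the most care is the bookkeeping of adjoints in the right Clifford-module setting: one must confirm that $P_\Omega$ stays self-adjoint for the real scalar product \eqref{eq:scalar-product} and not only for the $C\ell_{0,n}$-valued inner product \eqref{eq:inner-product}, and that the reversal rule $(AB)^*=B^*A^*$ and the identity $(S_{\alpha,\beta}^{-1})^*=(S_{\alpha,\beta}^*)^{-1}$ are consistently taken with respect to this same real scalar product throughout. Everything else is a formal manipulation of the bounded invertible operator $S_{\alpha,\beta}$ and its adjoint.
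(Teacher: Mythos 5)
Your proposal is correct and takes essentially the same route as the paper: the paper's proof likewise combines Theorem \ref{theo:Bergman-projection} ($P_{\alpha,\beta}=P_{\alpha,\beta}^{\prime}$) with the self-adjointness of $P_{\alpha,\beta}$ from Proposition \ref{prop:invariance}(ii), and notes that taking the adjoint of $P_{\alpha,\beta}^{\prime}$ yields exactly $S_{\alpha,\beta}^{-1}P_{\Omega}S_{\alpha,\beta}$. Your additional verifications (idempotency, identification of the range via \eqref{eq:fixed}, and the reduction of the $Q$-statement to $P_{\alpha,\beta}+Q_{\alpha,\beta}=I$) are harmless extras beyond what the paper records.
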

\begin{proof}
By Theorem \ref{theo:Bergman-projection}, $P_{\alpha,\beta}=P_{\alpha,\beta}^{\prime}$. By Proposition \ref{prop:invariance} $(ii)$, we have that the Vekua projection $P_{\alpha,\beta}$ is self-adjoint in $L^2(\Omega,C\ell_{0,n})$, we have $P_{\alpha,\beta}=(P_{\alpha,\beta}^{\prime})^*=S_{\alpha,\beta}^{-1} P_{\Omega} S_{\alpha,\beta}$.
\end{proof}

%Observe that from \eqref{eq:projection1} and \eqref{eq:projection2}, we obtain 
%\begin{align*}
%P_{\alpha,\beta}S_{\alpha,\beta}^*=S_{\alpha,\beta}^*P_{\Omega},\quad S_{\alpha,\beta}P_{\alpha,\beta}=P_{\Omega}S_{\alpha,\beta},
%\end{align*}
%\comment{Dar alguna referencia del tema de operadores de transmutación.}
%respectively. Recall the definition of transmutation operators, in this case the operator $S_{\alpha,\beta}^*$ ($S_{\alpha,\beta}$) plays the role of $n$-dimensional ``type'' transmutation operator, which transforms monogenic functions into solutions of the Vekua equation \eqref{eq:Vekua_equation} (solutions of the Vekua equation \eqref{eq:Vekua_equation} into monogenic functions), respectively.

\subsection{The Vekua reproducing kernel}\label{sec:kernel}
Recall that $A^2(\Omega,C\ell_{0,n})$ possess a reproducing kernel, see for instance \cite{Legatiuk2018}. That is, there exists $K\in A^2(\Omega,C\ell_{0,n})$ such that
\[
w(\vec x)=\langle  K(\cdot, \vec x), w(\cdot)\rangle_{L^2},\qquad \forall  w\in A^2(\Omega,C\ell_{0,n}).
\]
Moreover,
\begin{equation}\label{eq:kernel-1}
   P_{\Omega}[w](\vec x)=\langle  K(\cdot, \vec x), w(\cdot)\rangle_{L^2},\qquad \forall  w\in L^2(\Omega,C\ell_{0,n}),
\end{equation}
where $P_{\Omega}$ is the ortho-projection in the monogenic case defined in \eqref{eq:Bergman_projection}.

The Vekua projection and the existence of a reproducing kernel is trivial when $\alpha\equiv 0$ and $\beta=\nabla g/g$. Because, in this case the Vekua operators is factorized as follows \cite{Sprossig1993}
\[
  (D-\nabla g/g)=gDg^{-1}.
\]
Consequently, the Vekua projection is trivial
\begin{equation*}
   P_{\beta}=gPg^{-1}, \qquad    P_{\beta}[w](\vec x)=g \langle  K(\cdot, \vec x), g^{-1}w(\cdot)\rangle_{L^2}
   =g  \langle  g^{-1}K(\cdot, \vec x), w(\cdot)\rangle_{L^2}.
\end{equation*}

Nevertheless, for the case $\alpha\ne 0$, we will show by a contradiction argument that $A_{\alpha,\beta}^2(\Omega,C\ell_{0,n})$ does not inherit the reproductive property \eqref{eq:kernel-1}. Let us suppose that $A_{\alpha,\beta}^2(\Omega,C\ell_{0,n})$ also possesses a reproducing kernel $K_{\alpha,\beta}\in A_{\alpha,\beta}^2(\Omega,C\ell_{0,n})$ such that 
\begin{equation*}\label{eq:kernel-2}
   P_{\alpha,\beta}[w](\vec x)=\langle  K_{\alpha,\beta}(\cdot, \vec x), w(\cdot)\rangle_{L^2},\qquad \forall w\in L^2(\Omega,C\ell_{0,n}).
\end{equation*}
Thus, for every $w\in A_{\alpha,\beta}^2(\Omega,C\ell_{0,n})$ and $a\in C\ell_{0,n}$, we have
\begin{align*}
    P_{\alpha,\beta}[wa]=\langle  K_{\alpha,\beta}(\cdot, \vec x), w(\cdot)a\rangle_{L^2}=\langle  K_{\alpha,\beta}(\cdot, \vec x), w(\cdot)\rangle_{L^2}a=wa,
\end{align*}
which implies that $wa\in A_{\alpha,\beta}^2(\Omega,C\ell_{0,n})$, contradicting the fact that $A_{\alpha,\beta}^2(\Omega,C\ell_{0,n})$ is not a subspace over $C\ell_{0,n}$.

%Due to $S_{\alpha,\beta} K_{\alpha,\beta}\in A^2(\Omega,C\ell_{0,n})$, by the above-mentioned reproducing property \eqref{eq:kernel-1}, we have that 
%\begin{align*}
 %   S_{\alpha,\beta}K_{\alpha,\beta}&=\langle K, S_{\alpha,\beta} K_{\alpha,\beta}\rangle_{L^2}\\
%		&=\langle S_{\alpha,\beta}^*K, K_{\alpha,\beta}\rangle_{L^2}\\
 %           &=\overline{\langle  K_{\alpha,\beta}, S_{\alpha,\beta}^*K \rangle_{L^2}}\\
 %           &=\overline{P_{\alpha,\beta}\,S_{\alpha,\beta}^*K}\\
 %           &=\overline{S_{\alpha,\beta}^*K},
%\end{align*}            
%where the second last equality comes from \eqref{eq:kernel-2} and the last equality comes from the fact that $S_{\alpha,\beta}^*K\in A_{\alpha,\beta}^2(\Omega,C\ell_{0,n})$. So, if $S_{\alpha,\beta}$ is invertible, then an explicit expression of the reproducing kernel $K_{\alpha,\beta}$ is the following
%\begin{equation}\label{eq:kernel-expression}
%    K_{\alpha,\beta}=S_{\alpha,\beta}^{-1} \overline{S_{\alpha,\beta}^*K}.
%\end{equation}
%Consequently, $A_{\alpha,\beta}^2(\Omega,C\ell_{0,n})\cap \overline{A^2(\Omega,C\ell_{0,n})}\ne \emptyset$.

After the above discussion, we can still consider some reproducing kernels component-wise under the scalar product \eqref{eq:scalar-product}. 

Let $\mathcal{P}$ be the power set of $\{1,2, \cdots, n\}$ and recall the basic decomposition of $C\ell_{0,n}$-valued function as $w=\sum_{A} w_A\,e_A$ for all $A\in \mathcal{P}$, where $e_A=e_{i_1}e_{i_2}\cdots e_{i_k}$ and $A=\{i_1, i_2, \cdots i_k\}$, $1\leq i_1<i_2<\cdots <i_k\leq n$.

\begin{proposition}\label{prop:existence}
%Let $\alpha,\beta\in L^{\infty}(\Omega)$ be such that $S_{\alpha,\beta}$ be invertible in $L^{\infty}(\Omega)$.
If $\|\alpha\|_{L^{\infty}}+\|\beta \|_{L^{\infty}}< 1/\|T_{\Omega}\|$, with $\|T_{\Omega}\|$ the norm operator from $L^{\infty}(\Omega)$ to itselt, then there exists $K_{\alpha,\beta}^A(\cdot, \vec x)\in A_{\alpha,\beta}^2(\Omega,C\ell_{0,n})$ such that for all $w=\sum_{A} w_A\, e_A\in A_{\alpha,\beta}^2(\Omega,C\ell_{0,n})$
\begin{equation}\label{eq:reproducing-property}
w_A(\vec x)=\langle K_{\alpha,\beta}^A(\cdot, \vec x), w(\cdot) \rangle_{0,L^2}, \qquad \forall A\in \mathcal{P}.         
\end{equation}
\end{proposition}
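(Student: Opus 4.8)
The plan is to obtain $K^A_{\alpha,\beta}(\cdot,\vec x)$ as the Riesz representative of the functional that evaluates the $A$-th component at $\vec x$. First I would fix an interior point $\vec x\in\Omega$ and an index $A\in\mathcal{P}$, and define $\Lambda^A_{\vec x}\colon A_{\alpha,\beta}^2(\Omega,C\ell_{0,n})\to\mathbb{R}$ by $\Lambda^A_{\vec x}[w]=w_A(\vec x)=\text{Sc }(\overline{e_A}\,w(\vec x))$. By Proposition \ref{prop:Bergman-generalized-closed} together with the remarks following \eqref{eq:scalar-product}, the pair $\big(A_{\alpha,\beta}^2(\Omega,C\ell_{0,n}),\langle\cdot,\cdot\rangle_{0,L^2}\big)$ is a genuine real Hilbert space and the scalar product is symmetric and $\mathbb{R}$-bilinear. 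Hence $\Lambda^A_{\vec x}$ is $\mathbb{R}$-linear, and once it is shown to be bounded the real Riesz representation theorem produces a unique $K^A_{\alpha,\beta}(\cdot,\vec x)\in A_{\alpha,\beta}^2(\Omega,C\ell_{0,n})$ with $\Lambda^A_{\vec x}[w]=\langle K^A_{\alpha,\beta}(\cdot,\vec x),w\rangle_{0,L^2}$, which is precisely \eqref{eq:reproducing-property}; membership of the representative in $A_{\alpha,\beta}^2(\Omega,C\ell_{0,n})$ is then automatic. The whole proof therefore reduces to the continuity of $\Lambda^A_{\vec x}$.

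To bound $\Lambda^A_{\vec x}$ I would transport the problem to the monogenic setting via $S_{\alpha,\beta}$, which is invertible under the running hypothesis $\|\alpha\|_{L^{\infty}}+\|\beta\|_{L^{\infty}}<1/\|T_{\Omega}\|$ (Remark \ref{rmk:Neumann-serie}). For $w\in A_{\alpha,\beta}^2(\Omega,C\ell_{0,n})$ the function $u:=S_{\alpha,\beta}[w]$ is left-monogenic with $\|u\|_{L^2}\le\|S_{\alpha,\beta}\|\,\|w\|_{L^2}$, and monogenic functions already enjoy continuous component evaluation: from the Bergman kernel $K$ of $A^2(\Omega,C\ell_{0,n})$ in \eqref{eq:kernel-1} and the identity $a_B=\text{Sc }(\overline{e_B}a)$ one checks $u_B(\vec x)=\langle K(\cdot,\vec x)e_B,u\rangle_{0,L^2}$, so $|u_B(\vec x)|\le\|K(\cdot,\vec x)\|_{L^2}\,\|u\|_{L^2}$ at every interior $\vec x$. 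It then remains to pass from pointwise control of $u$ to pointwise control of $w$ through the identity $w=u+T_{\Omega}[(\alpha C+\beta I)w]$ (equivalently the Neumann series \eqref{eq:Neumann-series}), i.e.\ to control the Teodorescu correction at the interior point $\vec x$.

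This last step is the main obstacle. The functional $f\mapsto (T_{\Omega}f)(\vec x)$ is \emph{not} $L^2$-bounded once $n\ge 2$, since the Cauchy kernel $E(\vec y-\vec x)$ fails to be square-integrable near $\vec y=\vec x$, so the required estimate cannot be read off directly from the decomposition. Instead I would invoke interior regularity for the first-order elliptic system $Dw=(\alpha C+\beta I)w\in L^2$: this already gives $w\in W^{1,2}_{\mathrm{loc}}(\Omega)$ with $\|w\|_{W^{1,2}(\Omega')}\le C\big(\|w\|_{L^2}+\|(\alpha C+\beta I)w\|_{L^2}\big)\le C'\|w\|_{L^2}$ on $\Omega'\subset\subset\Omega$, and the goal is to upgrade this to a genuine pointwise estimate $|w_A(\vec x)|\le C_{\vec x}\,\|w\|_{L^2}$ at interior points. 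The concrete difficulty is the low regularity of the coefficients (merely $L^{\infty}$); the argument is cleanest when $\alpha,\beta$ are smooth — for instance the $\nabla f/f$ examples of the paper — in which case bootstrapping the Vekua equation renders $w$ smooth in the interior and the boundedness of $\Lambda^A_{\vec x}$ is transparent.

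Once continuity of $\Lambda^A_{\vec x}$ is secured for each interior $\vec x$ and each $A\in\mathcal{P}$, letting $\vec x$ vary defines the two-variable kernels $K^A_{\alpha,\beta}(\vec y,\vec x)$ and establishes \eqref{eq:reproducing-property}. I would then close by recording the explicit form of the Riesz representative obtained by unwinding the adjoint relations of Section \ref{sec:Bergman-projection} (expressing $K^A_{\alpha,\beta}$ through $S_{\alpha,\beta}^{*}$ and the monogenic kernel $K$), which is exactly what feeds into the projection identity \eqref{eq:reproductive-intro}.
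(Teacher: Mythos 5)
Your set-up---realizing $K^A_{\alpha,\beta}(\cdot,\vec x)$ as the Riesz representative of the $\mathbb{R}$-linear evaluation functional $\Lambda^A_{\vec x}$ on the real Hilbert space $\left(A^2_{\alpha,\beta}(\Omega,C\ell_{0,n}),\langle\cdot,\cdot\rangle_{0,L^2}\right)$---is exactly the paper's, and you correctly isolate the crux: the estimate $|w_A(\vec x)|\le C_{\vec x}\,\|w\|_{L^2(\Omega)}$ for $w\in A^2_{\alpha,\beta}(\Omega,C\ell_{0,n})$. But you do not prove this estimate under the hypotheses of the proposition. After observing (correctly) that pointwise evaluation of $T_\Omega$ is not $L^2$-bounded when $n\ge 2$, you fall back on interior elliptic regularity and close the argument only for smooth coefficients, explicitly leaving merely bounded $\alpha,\beta$ as an unresolved ``concrete difficulty.'' Since the proposition is stated for $\alpha,\beta\in L^{\infty}(\Omega,C\ell_{0,n})$, this is a genuine gap, not a presentational one; you also never use the quantitative hypothesis $\|\alpha\|_{L^{\infty}}+\|\beta\|_{L^{\infty}}<1/\|T_{\Omega}\|$ except to invert $S_{\alpha,\beta}$ on $L^2$, which is not where it is needed.

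The idea you are missing is the reason the hypothesis is phrased with $\|T_{\Omega}\|$ as the $L^{\infty}(\Omega)\to L^{\infty}(\Omega)$ operator norm. Fix a compact $K\subset\Omega$ containing $\vec x$. By \cite[Th.\ 3.14]{GuSpr1997} the Teodorescu operator $T_K$ taken over $K$ is bounded on $L^{\infty}(K)$ with $\|T_K\|\le\|T_{\Omega}\|$, so the smallness assumption makes $S^K_{\alpha,\beta}=I-T_K[\alpha C+\beta I]$ invertible on $L^{\infty}(K)$ by a Neumann series; this inversion takes place purely in sup-norm and requires no regularity of $\alpha,\beta$ beyond boundedness. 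Writing $w=(S^K_{\alpha,\beta})^{-1}S^K_{\alpha,\beta}[w]$ gives $\max_{K}|w|\le\|(S^K_{\alpha,\beta})^{-1}\|\,\max_{K}|S^K_{\alpha,\beta}[w]|$, and $S^K_{\alpha,\beta}[w]$ is monogenic (hence harmonic) in the interior of $K$ because $DT_K=I$ there, so the interior estimate for harmonic functions bounds its maximum by a constant times $\|S_{\alpha,\beta}[w]\|_{L^2(\Omega)}\le\|S_{\alpha,\beta}\|\,\|w\|_{L^2(\Omega)}$. This chain is precisely the ``upgrade'' step you could not supply: the $L^{\infty}$ Neumann series transfers the sup-bound from the monogenic part back to $w$ itself, circumventing the non-$L^2$-boundedness of pointwise evaluation of $T_{\Omega}$. (Incidentally, your own route could also be completed without smoothness: multiplication by $L^{\infty}$ functions preserves $L^q$, so Borel--Pompeiu on shrinking subdomains together with $T_{\Omega'}\colon L^q\to W^{1,q}$ and Sobolev embedding bootstraps $w$ from $L^2_{\mathrm{loc}}$ up to $L^q_{\mathrm{loc}}$ with $q>n$ and thence to local continuity with sup bounds---local boundedness, not smoothness, is all that evaluation requires. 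But as written, your argument does not establish the proposition as stated.)
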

\begin{proof}
Let $K\subseteq \Omega$ be a compact and $w\in A_{\alpha,\beta}^2(\Omega,C\ell_{0,n})$. 
By \cite[Th.\ 3.14]{GuSpr1997} there exists $C_1>0$ such that the Teodorescu operator $T_K$ considered over $K$ satisfies
\[
   \max_{x\in K}|T_K[w](x)|\leq C_1 \|w\|_{L^p(K)}\leq C \,\text{Vol }(K)\max_{x\in K}|w(x)|, \qquad n<p<\infty.
\]
Thus, $T_K$ is a bounded operator from $L^{\infty}(K)$ to itself, let us denote its norm operator as $\|T_K\|$. Since $\|T_K\|\leq \|T_{\Omega}\|$, then the restriction operator $S_{\alpha,\beta}^K:=I-T_{K}[\alpha C+\beta I]$ is a bounded in $L^{\infty}(K)$ with $\|T_{K}[\alpha C+\beta I]\|<1$. Therefore, $S_{\alpha,\beta}^K$ has a bounded inverse in $L^{\infty}(K)$ provided by the Neumann series \cite{Kub2012}
\[
    (S_{\alpha,\beta}^K)^{-1}=\sum_{m=0}^{\infty} T_K[\alpha C+\beta I]^m.
\]
Moreover,
\begin{align}\label{eq:bound-w}
   \max_{x\in K}|w(x)|=\max_{x\in K}|(S_{\alpha,\beta}^K)^{-1}\circ S_{\alpha,\beta}^K\,w(x)|\leq \|(S_{\alpha,\beta}^K)^{-1}\|\, \max_{x\in K}|S_{\alpha,\beta}^K[w](x)|,
\end{align}
where $\|(S_{\alpha,\beta}^K)^{-1}\|$ denotes the norm operator from $L^{\infty}(K)$ to itself. Due to $S_{\alpha,\beta}\colon A^2_{\alpha,\beta}(\Omega)\to A^2(\Omega)$, then $S_{\alpha,\beta}[w]$ is harmonic in $\Omega$, then there exists $C_2>0$ depending on $K$ such that
\begin{equation}\label{eq:elliptic}
\max_{x\in K}|S_{\alpha,\beta}^K[w](x)|\leq C_2 \|S_{\alpha,\beta}[w]\|_{L^2(\Omega)}.
\end{equation}
Combining \eqref{eq:bound-w} and \eqref{eq:elliptic}, we readily obtain that for all $A\in \mathcal{P}$
\[
\max_{x\in K}|w_A(x)|\leq \max_{x\in K}|w(x)|\leq C_3 \|w\|_{L^2(\Omega)}.
\]
Finally, by the Riesz representation theorem \cite{Davis1975} there exist reproducing \textit{Vekua kernels} $K_{\alpha,\beta}^A(\vec x,\vec y)$ ($\forall A\in \mathcal{P}$), such that $K_{\alpha,\beta}^A(\cdot, \vec x)\in A_{\alpha,\beta}^2(\Omega,C\ell_{0,n})$ satisfies the reproductive property \eqref{eq:reproducing-property}.
\end{proof}
%The hypotheses considered in Proposition \ref{prop:existence} can be ignored under the observation that we can re-scale our Vekua equation in such way that the necessary condition $\|\alpha\|_{L^{\infty}}+\|\beta \|_{L^{\infty}}< 1/\|T_{\Omega}\|$ for guaranteeing the existence of $S_{\alpha,\beta}^{-1}$ is always fulfilled. 

An immediate consequence of Proposition \ref{prop:existence} is the following, for all $A,B\in \mathcal{P}$
\begin{align}\label{eq:symetry}
[K_{\alpha,\beta}^A]_B(\vec x,\vec y)=\langle K_{\alpha,\beta}^B(\cdot, \vec x), K_{\alpha,\beta}^A(\cdot,\vec y) \rangle_{0,L^2}=\langle  K_{\alpha,\beta}^A(\cdot,\vec y), K_{\alpha,\beta}^B(\cdot, \vec x) \rangle_{0,L^2}=[K_{\alpha,\beta}^B]_A(\vec y,\vec x).
\end{align}	
%which easily implies that
%\[
%  K_{\alpha,\beta}^A(\vec x,\vec y)=\sum_{B} e_B\, \langle K_{\alpha,\beta}^B(\cdot, \vec x), K_{\alpha,\beta}^A(\cdot,\vec y) \rangle_{0,L^2}.
%\]
The following result generalizes \cite[Prop.\ 14]{Campos2020} in the complex case.
\begin{proposition}\label{prop:projector}
An equivalent expression for the Vekua projection $P_{\alpha,\beta}$ defined in \eqref{eq:ortho-projections} is
\begin{equation*}%\label{inner-product}
P_{\alpha,\beta}[w](\vec x)=\bigintsss_{\Omega} \sum_B (-1)^{\frac{|B|(|B|+1)}{2}}\, e_B^2\,  K^B_{\alpha,\beta}(\vec x,\vec y) \, w_B(\vec y) \,d\sigma_{\vec y}, \qquad \forall w\in L^2(\Omega,C\ell_{0,n}). 
\end{equation*}
\end{proposition}
\begin{proof}
Let $w=\sum_{B}w_B\, e_B\in L^2(\Omega,C\ell_{0,n})$. By the component-wise reproducing property \eqref{eq:reproducing-property}, the symmetry property \eqref{eq:symetry} and from the fact that $\Sc(\overline{u}v)=\Sc(u\overline{v})$ for all $u,v\in C\ell_{0,n}$, we have
\begin{align*}
  P_{\alpha,\beta}[w](\vec x)&=\sum_A [P_{\alpha,\beta}[w]]_A(\vec x)\, e_A
  =\sum_A \langle K_{\alpha,\beta}^A(\cdot, \vec x), w(\cdot) \rangle_{0,L^2}\, e_A\\
  &=\sum_A \int_{\Omega} \Sc\left( K_{\alpha,\beta}^A (\vec  y,\vec x)\, \overline{w(y)}\right)\, d\sigma_{\vec y}\, e_A\\
  &=\sum_{A,B} \int_{\Omega} (-1)^{\frac{|B|(|B|+1)}{2}}\, e_B^2\,[K_{\alpha,\beta}^A]_B(\vec y, \vec x)\, w_B(\vec y)\,d\sigma_{\vec y}\, e_A\\
  &=\sum_{A,B} \int_{\Omega} (-1)^{\frac{|B|(|B|+1)}{2}}\, e_B^2\,[K_{\alpha,\beta}^B]_A(\vec x, \vec y)e_A\, w_B(\vec y)\, d\sigma_{\vec y},
\end{align*}
where the fourth chain of equalities is justified by the fact that $\Sc\left( K_{\alpha,\beta}^A (\vec y,\vec x)\, \overline{w(y)}\right)=\sum_B \Sc\left( K_{\alpha,\beta}^A (\vec  y,\vec x)\, w_B\overline{e_B}\right)=\sum_B [K_{\alpha,\beta}^A]_B(\vec y, \vec x)\, w_B(\vec y) e_B\overline{e_B}$ and $\overline{e_B}=(-1)^{\frac{|B|(|B|+1)}{2}}\, e_B$.
\end{proof}
In the next Proposition \ref{prop:several-properties} we can observe the importance of the construction of an orthonormal basis in the Hilbert space $A_{\alpha,\beta}^2(\Omega, C\ell_{0,n})$, because in that way it is possible to find a closed form for the component-wise Vekua kernel $K_{\alpha,\beta}^A$, for every $A\in \mathcal{P}$. 
\begin{proposition}\label{prop:several-properties}
The following statement follows
\begin{enumerate}
\item [(i)] $A_{\alpha,\beta}^2(\Omega,C\ell_{0,n})$ admits an orthonormal basis $\{e_n\}$. Moreover, $\left\{P_{\alpha,\beta}[e_n] \right\}$ is complete in $A_{\alpha,\beta}^2(\Omega,C\ell_{0,n})$.
\item[(ii)] An expression for the Vekua kernels $ K_{\alpha,\beta}^A$ is the following
\begin{align*}
    K_{\alpha,\beta}^A(\vec x,\vec y)=\sum_{n=0}^{\infty} [e_n(\vec x)]_A \, e_n(\vec y), \qquad \forall A\in \mathcal{P}.
\end{align*}
\end{enumerate}
\end{proposition}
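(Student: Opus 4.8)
The plan is to establish both parts simultaneously by first constructing the orthonormal basis and then reading off the kernel expression from the completeness relation. Since $A_{\alpha,\beta}^2(\Omega,C\ell_{0,n})$ is a separable Hilbert space over $\mathbb{R}$ under the scalar product \eqref{eq:scalar-product} (by Corollary \ref{cor:reflexive-separable} together with the fact that the scalar product \eqref{eq:scalar-product} is $\mathbb{R}$-bilinear and real-valued), standard Hilbert space theory guarantees the existence of a countable orthonormal basis $\{e_n\}$ with respect to $\langle\cdot,\cdot\rangle_{0,L^2}$. For the completeness of $\{P_{\alpha,\beta}[e_n]\}$ in part (i), I would observe that each $e_n\in A_{\alpha,\beta}^2(\Omega,C\ell_{0,n})$ is already fixed by the projection, so $P_{\alpha,\beta}[e_n]=e_n$; hence $\{P_{\alpha,\beta}[e_n]\}=\{e_n\}$ is trivially complete. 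This makes part (i) essentially a restatement of separability plus the idempotency of $P_{\alpha,\beta}$ on its image.

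For part (ii), the approach is to expand the reproducing kernel $K_{\alpha,\beta}^A(\cdot,\vec y)$, which lives in $A_{\alpha,\beta}^2(\Omega,C\ell_{0,n})$ by Proposition \ref{prop:existence}, in the orthonormal basis $\{e_n\}$. Writing $K_{\alpha,\beta}^A(\cdot,\vec y)=\sum_n e_n(\cdot)\, c_n$ for real coefficients $c_n=c_n(A,\vec y)$, the orthonormality gives
\begin{equation*}
c_n=\langle e_n(\cdot), K_{\alpha,\beta}^A(\cdot,\vec y)\rangle_{0,L^2}.
\end{equation*}
The key step is to evaluate this coefficient using the component-wise reproducing property \eqref{eq:reproducing-property}. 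Applying \eqref{eq:reproducing-property} with $w=e_n$ and the symmetry relation \eqref{eq:symetry}, the inner product $\langle K_{\alpha,\beta}^A(\cdot,\vec y), e_n(\cdot)\rangle_{0,L^2}$ reproduces the $A$-component $[e_n(\vec y)]_A$ of $e_n$ at the point $\vec y$. Matching this against the coefficient extracted above (and using that the scalar product is symmetric, $\langle u,v\rangle_{0,L^2}=\langle v,u\rangle_{0,L^2}$) yields $c_n=[e_n(\vec y)]_A$, whence
\begin{equation*}
K_{\alpha,\beta}^A(\vec x,\vec y)=\sum_{n=0}^{\infty}[e_n(\vec y)]_A\, e_n(\vec x).
\end{equation*}
A small bookkeeping point is that the roles of $\vec x$ and $\vec y$ in the stated formula versus my derivation must be reconciled via the symmetry property \eqref{eq:symetry}; I would invoke \eqref{eq:symetry} explicitly to land on the exact form $K_{\alpha,\beta}^A(\vec x,\vec y)=\sum_n [e_n(\vec x)]_A\, e_n(\vec y)$ claimed in the statement.

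The main obstacle I anticipate is the careful tracking of the $C\ell_{0,n}$-module structure and the distinction between the full inner product \eqref{eq:inner-product} and the scalar product \eqref{eq:scalar-product}: the basis is orthonormal with respect to the \emph{real} scalar product, so the expansion coefficients $c_n$ are real numbers, and one must verify that the series $\sum_n [e_n(\vec x)]_A\, e_n(\vec y)$ converges in $A_{\alpha,\beta}^2(\Omega,C\ell_{0,n})$ as a function of its free variable and indeed reproduces components correctly. Convergence follows from Bessel/Parseval applied to $K_{\alpha,\beta}^A(\cdot,\vec y)$ together with the pointwise boundedness established in the proof of Proposition \ref{prop:existence} (the estimate $\max_{x\in K}|w_A(x)|\le C_3\|w\|_{L^2(\Omega)}$), which guarantees that evaluation functionals are continuous and hence that the kernel expansion converges appropriately. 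Beyond this, the verification is a routine matching of coefficients and an application of the already-established symmetry \eqref{eq:symetry} and reproducing property \eqref{eq:reproducing-property}.
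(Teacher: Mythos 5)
Your route is the same as the paper's: part (i) from separability plus the idempotency of $P_{\alpha,\beta}$, and part (ii) by expanding the kernel in the orthonormal basis and evaluating the Fourier coefficients with the reproducing property \eqref{eq:reproducing-property}. Part (i) is fine, and your remark that $P_{\alpha,\beta}[e_n]=e_n$ makes the completeness claim immediate (the paper instead expands $w=\sum_n a_n e_n$ and applies the bounded projection termwise; the two arguments are equivalent). Your core computation in part (ii) is also correct and, in fact, more careful than the paper's: since Proposition \ref{prop:existence} only guarantees that the kernel belongs to $A^2_{\alpha,\beta}(\Omega,C\ell_{0,n})$ as a function of its \emph{first} variable, the expansion must be carried out in that variable, and uniqueness of the Riesz representer together with $\langle e_n(\cdot),K^A_{\alpha,\beta}(\cdot,\vec y)\rangle_{0,L^2}=[e_n(\vec y)]_A$ forces
\begin{equation*}
K^A_{\alpha,\beta}(\vec x,\vec y)=\sum_{n=0}^{\infty}[e_n(\vec y)]_A\,e_n(\vec x),
\end{equation*}
exactly as you derived.

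The flaw is your final ``bookkeeping'' step. The symmetry \eqref{eq:symetry} reads $[K^A_{\alpha,\beta}]_B(\vec x,\vec y)=[K^B_{\alpha,\beta}]_A(\vec y,\vec x)$: interchanging the two arguments forces a simultaneous interchange of the superscript $A$ with the component index $B$. It does \emph{not} give the argument-swap symmetry $K^A_{\alpha,\beta}(\vec x,\vec y)=K^A_{\alpha,\beta}(\vec y,\vec x)$, which is what you would need to pass from your formula to the one displayed in the statement. Concretely, in $B$-components your formula reads $\sum_n [e_n(\vec y)]_A[e_n(\vec x)]_B$ while the stated one reads $\sum_n [e_n(\vec x)]_A[e_n(\vec y)]_B$; these coincide for $B=A$ but not in general, and applying \eqref{eq:symetry} to either expression merely reproduces it (the index swap undoes the argument swap). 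So the invocation of \eqref{eq:symetry} proves nothing here. You should be aware that the paper's own proof glosses over the same point: it expands $K^A_{\alpha,\beta}(\vec x,\cdot)$ in the \emph{second} variable and applies \eqref{eq:reproducing-property} with the arguments transposed, implicitly identifying $K^A_{\alpha,\beta}(\vec x,\cdot)$ with $K^A_{\alpha,\beta}(\cdot,\vec x)$. The honest resolution is that the stated formula must be read under the transposed convention for the kernel's arguments (evaluation point first, integration variable second, as in Proposition \ref{prop:projector}), in which case it is literally your formula with the variables relabeled; it is not a consequence of \eqref{eq:symetry}, and the genuine argument-swap symmetry is neither proved in the paper nor available from its results. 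Either state the result in the form you derived it, or prove that extra symmetry separately.
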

\begin{proof}
The part $(i)$, is straightforward from the fact that $A_{\alpha,\beta}^2(\Omega,C\ell_{0,n})$ is separable (see Corollary \ref{cor:reflexive-separable}). Thus, $A_{\alpha,\beta}^2(\Omega,C\ell_{0,n})$ has an orthonormal basis \cite[Th.\ 5.11]{Brezis2011}, namely $\{e_n\}\subset A_{\alpha,\beta}^2(\Omega,C\ell_{0,n})$. 
%It is well known that $L^2(\Omega,C\ell_{0,n})$ has an orthonormal basis under the usual inner product \eqref{eq:inner-product}, namely $\{\varphi_n\}\subseteq L^2(\Omega,C\ell_{0,n})$. 
Let $w\in A_{\alpha,\beta}^2(\Omega,C\ell_{0,n})$. Then $w=\sum_{n=0}^{\infty} a_n \,e_n$. Applying the Vekua projection $P_{\alpha,\beta}$, we have $w=P_{\alpha,\beta}[w] =\sum_{n=0}^{\infty} a_n \, P_{\alpha,\beta}[e_n]$. 

On the other hand, the part $(ii)$ is a consequence of the representation in Fourier series \cite[Cor.\ 5.10]{Brezis2011} of the Vekua kernels $K_{\alpha,\beta}^A\in A^2_{\alpha,\beta}(\Omega,C\ell_{0,n})$ and by the reproducing property \eqref{eq:reproducing-property}, we have
\[
K_{\alpha,\beta}^A(\vec x,\vec y)=\sum_{n=0}^{\infty}  \langle K_{\alpha,\beta}^A(\vec x,\cdot), e_n(\cdot)\rangle_{0,L^2}\, e_n(\vec y)=\sum_{n=0}^{\infty} [e_n(\vec x)]_A \, e_n(\vec y).\qedhere
\]
\end{proof}

%\comment{In this case, we only can applied once the invariance formula.}
%Applying twice the invariance formula \eqref{eq:invariance_formula} we have
%\begin{align*}
%\langle P_{\alpha,\beta}[z^n],P_{\alpha,\beta}[z^m]\rangle_{L^2}=\overline{\langle z^m,z^n\rangle}_{L^2}=\left\{
%   \begin{array}{ccc}
%      1, &\text{  } m=n,\\
%      0, &\text{  } m\ne n.
%   \end{array}
%\right. 
%\end{align*}

\section{Conclusions}
 In this paper, we analyze the space of $L^p$ solutions of the Vekua equation in the framework of Clifford algebras. Moreover, we give a Hodge decomposition theorem for this class of generalized monogenic functions. Besides, we propose two different approaches for the construction of the Vekua projector, one through a composition of operators that interrelate the space of $L^p$ solutions of the Vekua equation with the $L^p$ space of monogenic functions, and the second approach is based on the construction of some component-wise reproducing kernel, in the classic Bergman's sense.
The author is aware that the construction of an orthonormal basis of the Vekua spaces $A^2_{\alpha,\beta}(\Omega)$ analyzed in this work is fundamental for obtaining explicit expressions of the Vekua projector $P_{\alpha,\beta}$ and the component-wise Vekua kernels $K_{\alpha,\beta}^A$ (see Proposition \ref{prop:projector} and \ref{prop:several-properties}).  
\section*{Acknowledgements}
We thank the anonymous reviewer for her/his useful comments and for pointing out the possible applications of the Vekua equations to boundary value problems or inverse problems. 
We also thank Pablo E. Moreira for the suggestion to use the modular property of the conjugation in Clifford algebras in the proof of Proposition \ref{prop:Beltrami equivalence}.

%\nocite{*}
%\bibliographystyle{plain}
%\bibliography{BEP_bib}
\end{document}